\begin{document}
\baselineskip14.625pt
\newcommand{\nc}[2]{\newcommand{#1}{#2}}
\newcommand{\rnc}[2]{\renewcommand{#1}{#2}}
\rnc{\theequation}{\thesection.\arabic{equation}}
\def\note#1{{}}

\newtheorem{definition}{Definition $\!\!$}[section]
\newtheorem{proposition}[definition]{Proposition $\!\!$}
\newtheorem{lemma}[definition]{Lemma $\!\!$}
\newtheorem{corollary}[definition]{Corollary $\!\!$}
\newtheorem{theorem}[definition]{Theorem $\!\!$}
\newtheorem{example}[definition]{\sc Example $\!\!$}
\newtheorem{remark}[definition]{\sc Remark $\!\!$}

\nc{\beq}{\begin{equation}}
\nc{\eeq}{\end{equation}}
\rnc{\[}{\beq}
\rnc{\]}{\eeq}
\nc{\ba}{\begin{array}}
\nc{\ea}{\end{array}}
\nc{\bea}{\begin{eqnarray}}
\nc{\beas}{\begin{eqnarray*}}
\nc{\eeas}{\end{eqnarray*}}
\nc{\eea}{\end{eqnarray}}
\nc{\be}{\begin{enumerate}}
\nc{\ee}{\end{enumerate}}
\nc{\bd}{\begin{diagram}}
\nc{\ed}{\end{diagram}}
\nc{\bi}{\begin{itemize}}
\nc{\ei}{\end{itemize}}
\nc{\bpr}{\begin{proposition}}
\nc{\bth}{\begin{theorem}}
\nc{\ble}{\begin{lemma}}
\nc{\bco}{\begin{corollary}}
\nc{\bre}{\begin{remark}\em}
\nc{\bex}{\begin{example}\em}
\nc{\bde}{\begin{definition}}
\nc{\ede}{\end{definition}}
\nc{\epr}{\end{proposition}}
\nc{\ethe}{\end{theorem}}
\nc{\ele}{\end{lemma}}
\nc{\eco}{\end{corollary}}
\nc{\ere}{\hfill\mbox{$\Diamond$}\end{remark} }
\nc{\eex}{\hfill\mbox{$\Diamond$}\end{example}}
\nc{\bpf}{{\it Proof.~~}}
\nc{\epf}{\hfill\mbox{$\square$}\vspace*{3mm}}
\nc{\hsp}{\hspace*}
\nc{\vsp}{\vspace*}
\newcommand{\wegdamit}[1]{}

\nc{\ot}{\otimes}
\nc{\te}{\!\ot\!}
\nc{\bmlp}{\mbox{\boldmath$\left(\right.$}}
\nc{\bmrp}{\mbox{\boldmath$\left.\right)$}}
\nc{\LAblp}{\mbox{\LARGE\boldmath$($}}
\nc{\LAbrp}{\mbox{\LARGE\boldmath$)$}}
\nc{\Lblp}{\mbox{\Large\boldmath$($}}
\nc{\Lbrp}{\mbox{\Large\boldmath$)$}}
\nc{\lblp}{\mbox{\large\boldmath$($}}
\nc{\lbrp}{\mbox{\large\boldmath$)$}}
\nc{\blp}{\mbox{\boldmath$($}}
\nc{\brp}{\mbox{\boldmath$)$}}
\nc{\LAlp}{\mbox{\LARGE $($}}
\nc{\LArp}{\mbox{\LARGE $)$}}
\nc{\Llp}{\mbox{\Large $($}}
\nc{\Lrp}{\mbox{\Large $)$}}
\nc{\llp}{\mbox{\large $($}}
\nc{\lrp}{\mbox{\large $)$}}
\nc{\lbc}{\mbox{\Large\boldmath$,$}}
\nc{\lc}{\mbox{\Large$,$}}
\nc{\Lall}{\mbox{\Large$\forall\;$}}
\nc{\bc}{\mbox{\boldmath$,$}}
\nc{\ra}{\rightarrow}
\nc{\ci}{\circ}
\nc{\cc}{\!\ci\!}
\nc{\lra}{\longrightarrow}
\nc{\imp}{\Rightarrow}
\rnc{\iff}{\Leftrightarrow}
\nc{\inc}{\mbox{$\,\subseteq\;$}}
\rnc{\subset}{\inc}
\def\sw#1{{\sb{(#1)}}}
\newcommand{\Boxneu}{\square}
\def\tr{{\rm tr}}
\def\Tr{{\rm Tr}}
\def\st{\stackrel}
\def\<{\langle}
\def\>{\rangle}
\def\d{\mbox{$\mathop{\mbox{\rm d}}$}}
\def\id{\mbox{$\mathop{\mbox{\rm id}}$}}
\def\ker{\mbox{$\mathop{\mbox{\rm Ker$\,$}}$}}
\def\coker{\mbox{$\mathop{\mbox{\rm Coker$\,$}}$}}
\def\hom{\mbox{$\mathop{\mbox{\rm Hom}}$}}
\def\im{\mbox{$\mathop{\mbox{\rm Im}}$}}
\def\map{\mbox{$\mathop{\mbox{\rm Map}}$}}
\def\o{\sp{[1]}}
\def\mo{\sp{[-1]}}
\def\z{\sp{[0]}}
\def\lhom#1#2#3{{{}\sb{#1}{\rm Hom}(#2,#3)}}
\def\rhom#1#2#3{{{\rm Hom}\sb{#1}(#2,#3)}}
\def\lend#1#2{{{}\sb{#1}{\rm End}(#2)}}
\def\rend#1#2{{{\rm End}\sb{#1}(#2)}}
\def\Rhom#1#2#3{{{\rm Hom}\sp{#1}(#2,#3)}}
\def\Lhom#1#2#3{{{}\sp{#1}{\rm Hom}(#2,#3)}}
\def\Rrhom#1#2#3#4{{{\rm Hom}\sp{#1}\sb{#2}(#3,#4)}}
\def\Llhom#1#2#3#4{{{}\sp{#1}\sb{#2}{\rm Hom}(#3,#4)}}
\def\LRhom#1#2#3#4#5#6{{{}\sp{#1}\sb{#2}{\rm Hom}\sp{#3}\sb{#4}(#5,#6)}}
\def\khom#1#2{{{\rm Hom}(#1,#2)}}

\nc{\spp}{\mbox{${\cal S}{\cal P}(P)$}}
\nc{\ob}{\mbox{$\Omega\sp{1}\! (\! B)$}}
\nc{\op}{\mbox{$\Omega\sp{1}\! (\! P)$}}
\nc{\oa}{\mbox{$\Omega\sp{1}\! (\! A)$}}
\nc{\dr}{\mbox{$\Delta_{R}$}}
\nc{\dsr}{\mbox{$\Delta_{\Omega^1P}$}}
\nc{\ad}{\mbox{$\mathop{\mbox{\rm Ad}}_R$}}
\nc{\as}{\mbox{$A(S^3\sb s)$}}
\nc{\bs}{\mbox{$A(S^2\sb s)$}}
\nc{\slc}{\mbox{$A(SL(2,\C))$}}
\nc{\suq}{\mbox{$\cO(SU_q(2))$}}
\nc{\tc}{\widetilde{can}}
\def\slq{\mbox{$\cO(SL_q(2))$}}
\def\asq{\mbox{$\cO(S_{q,s}^2)$}}
\def\esl{{\mbox{$E\sb{\frak s\frak l (2,{\Bbb C})}$}}}
\def\esu{{\mbox{$E\sb{\frak s\frak u(2)}$}}}
\def\ox{{\mbox{$\Omega\sp 1\sb{\frak M}X$}}}
\def\oxh{{\mbox{$\Omega\sp 1\sb{\frak M-hor}X$}}}
\def\oxs{{\mbox{$\Omega\sp 1\sb{\frak M-shor}X$}}}
\def\Fr{\mbox{Fr}}
\nc{\p}{{\rm pr}}
\newcommand{\can}{\operatorname{\it can}}

\nc{\vare}{\varepsilon}
\nc{\ha}{\mbox{$\alpha$}}
\nc{\hb}{\mbox{$\beta$}}
\nc{\hg}{\mbox{$\gamma$}}
\nc{\hd}{\mbox{$\delta$}}
\nc{\he}{\mbox{$\varepsilon$}}
\nc{\hz}{\mbox{$\zeta$}}
\nc{\hs}{\mbox{$\sigma$}}
\nc{\hk}{\mbox{$\kappa$}}
\nc{\hm}{\mbox{$\mu$}}
\nc{\hn}{\mbox{$\nu$}}
\nc{\hl}{\mbox{$\lambda$}}
\nc{\hG}{\mbox{$\Gamma$}}
\nc{\hD}{\mbox{$\Delta$}}
\nc{\ho}{\mbox{$\omega$}}
\nc{\hO}{\mbox{$\Omega$}}
\nc{\hp}{\mbox{$\pi$}}
\nc{\hP}{\mbox{$\Pi$}}
\nc{\co}{\mathrm{co}}

\nc{\qpb}{quantum principal bundle}
\def\gal{-Galois extension}
\def\hge{Hopf-Galois extension}
\def\ses{short exact sequence}
\def\csa{$C^*$-algebra}
\def\ncg{noncommutative geometry}
\def\wrt{with respect to}
\def\Ha{Hopf algebra}

\def\C{{\Bbb C}}
\def\N{{\Bbb N}}
\def\R{{\Bbb R}}
\def\Z{{\Bbb Z}}
\def\T{{\Bbb T}}
\def\Q{{\Bbb Q}}
\def\cO{{\mathcal O}}
\def\cT{{\cal T}}
\def\cA{{\cal A}}
\def\cD{{\cal D}}
\def\cB{{\cal B}}
\def\cK{{\cal K}}
\def\cH{{\cal H}}
\def\cM{{\cal M}}
\def\cJ{{\cal J}}
\def\fB{{\frak B}}
\def\pr{{\rm pr}}
\def\ta{\tilde a}
\def\tb{\tilde b}
\def\td{\tilde d}
\def\tA{\tilde A}
\def\tB{\tilde B}
\def\tK{\tilde K}
\def\tF{\tilde F}

\newenvironment{pf}{\emph{Proof.}}{\hfill$\Box$\\[1mm]}

\newcommand{\A}{{\mathcal{A}}}
\newcommand{\B}{{\mathcal{B}}}
\newcommand{\I}{{\mathcal{I}}}

\def\qed{{$\Box$}\medskip}

\def\bA{\mathbb{A}}
\def\bC{\mathbb{C}}
\def\bN{\mathbb{N}} 
\def\bQ{\mathbb{Q}}
\def\bR{\mathbb{R}}  
\def\bT{{\mathbf T}} 
\def\bZ{\mathbb{Z}}
\def\bl{{\mathbf \lambda}}

\def\one{\mathbf{1}}
\def\bigone{\mathbf{1}}
\def\can{\mathrm{can}}

\def\A{{A}}
\def\B{B}
\def\C{\mathcal{C}}
\def\F{\mathcal{F}}
\def\sG{\mathcal{G}}
\def\H{{H}}
 \def\sP{\mathcal{P}}
\def\sC{\mathcal{C}} \def\sR{\mathcal{R}}
\def\sO{\mathcal{O}} \def\sT{\mathcal{T}}
\def\L{{L}}
\def\M{{M}}

\def\N{\mathcal{N}}
\def\Eacute{\mathrm{\acute{E}}}
\def\eacute{\mathrm{\acute{e}}}
\def\P{\Psi}
\def\R{\mathcal{R}}
\def\V{\mathcal{V}}
\def\X{\mathcal{X}}
\def\Y{\mathcal{Y}}
\def\Z{\mathcal{Z}}

\def\LM{\L(\M)}

\def\Ad{\mathrm{Ad}} \def\id{\mathrm{id}}
\def\Inv{\mathrm{Inv}} \def\tr{\mathrm{tr}}
\def\Re{\mathrm{Re\,}} \def\Aut{\mathrm{Aut\,}}
\def\can{\mathrm{can}}
\def\der{\partial}
\def\im{\mathrm{im}}
\def\ker{\mathrm{ker}}
\def\coker{\mathrm{coker}}
\def\Ext{\mathrm{Ext}}
\def\Tor{\mathrm{Tor}}
\def\Cotor{\mathrm{Cotor}}
\def\Lin{\mathrm{Lin}}
\def\rank{\mathrm{rank}}
\def\degree{\mathrm{degree}}
\def\integers{\mathrm{integers}}
\def\integers{\mathrm{integers}}
\def\for{\mathrm{for}}
\def\isom{\cong}
\def\linear{\mathrm{linear}}
\def\otherwise{\mathrm{otherwise}}
\def\case{\mathrm{case}}
\def\even{\mathrm{even}}
\def\odd{\mathrm{odd}}
\def\op{\mathrm{op}}
\def\Ob{\mathrm{Ob}}
\def\zvec{{}^\bZ {\bf Vec}}


\newcommand{\nn}{\nonumber}         
\newcommand{\eps}{\varepsilon}      
\newcommand{\cop}{\Delta}           
\newcommand{\lcross}{{>\!\!\!\triangleleft}}
\newcommand{\rcross}{{\triangleright\!\!\!<}}
\newcommand{\sbullet}{\raisebox{1pt}{{\tiny \mbox{${\ \bullet}$}}}\,}

\def\be{\begin{equation}}
\def\ee{\end{equation}}
\def\ot{\otimes}
\def\u{\underline}

\def\IZ{{\Bbb Z}}
\def\IC{{\Bbb C}}
\def\a{\alpha}
\def\b{\beta}
\def\slq2{SL_q (2)}
\def\s{\sigma}
\def\smod{\sigma_{mod}}
\def\rto{\rightarrow}

\def\tr{\triangleright}
\def\tl{\triangleleft}
\def\btr{\blacktriangleright}
\def\btl{\blacktriangleleft}

\def\half{{\frac{1}{2}}}

\def\nchoosek{\left(\begin{array}{cr} n\\ k \end{array} \right)}
\def\qchooser{\left(\begin{array}{cr} q\\ r \end{array} \right)}
\def\kchoosel{\left(\begin{array}{cr} k\\ l \end{array} \right)}

\def\sqbnchooser{\left[\begin{array}{cr} n\\ r \end{array} \right]}

\def\ctn{\mathrm{ctn}}
\def\some{\mathrm{some}}
\def\coH{\mathrm{coH}}

\def\zerob{{\overline{(0)}}}
\def\oneb{{\overline{(1)}}}
\def\twob{{\overline{(2)}}}

\def\zero{{(0)}}
\def\one{{{(1)}}}
\def\two{{{(2)}}}
\def\three{{{(3)}}}
\def\four{{{(4)}}}
\def\minusone{{(-1)}}

\def\sqone{{{[1]}}}
\def\sqtwo{{{[2]}}}

\def\tone{{\langle1\rangle}}
\def\ttwo{{\langle2\rangle}}

\def\oneb{{\overline{(1)}}}
\def\twob{{\overline{(2)}}}
\def\one{{(1)}}
\def\two{{(2)}}
\def\three{{(3)}}
\def\four{{{(4)}}}
\def\five{{{(5)}}}
\def\minusone{{(-1)}}

\def\utimes{\,\underline{\otimes}\,}
\def\ev{\mathrm{ev}}

\def\HA{H}
\def\HB{H}
\def\NT{A} 
\nc{\pt}{\hspace{1pt}}\nc{\npt}{\hspace{-1pt}}

\title{
BRAIDED  JOIN COMODULE ALGEBRAS OF GALOIS OBJECTS 
}

\author{  
Ludwik D\k abrowski  }
\address{
SISSA (Scuola Internazionale Superiore di Studi Avanzati), 
Via Bonomea 265, 34136 Trieste, Italy}
\email{
dabrow@sissa.it}

\author{ 
Tom Hadfield }
\address{  
G-Research, 
Whittington House, 19-30 Alfred Place, London WC1E 7EA, United Kingdom}
\email{Thomas.Daniel.Hadfield@gmail.com}

\author{ 
Piotr M.~Hajac }
\address{  
Katedra Metod Matematycznych Fizyki, Uniwersytet Warszawski, 
ul.\ Ho\.za 74, Warszawa, 00-682\hspace{-1pt} Poland\hspace{-1pt}
and\hspace{-1pt}  
Instytut\hspace{-1pt} Matematyczny, Polska\hspace{-1pt} Akademia\hspace{-1pt} Nauk, 
ul.\,\'Sniadeckich\hspace{-1pt} 8, Warszawa, 00-656 Poland}

\email{pmh@impan.pl}

\author{  
Elmar Wagner}
\address{Instituto de F\'isica y Matem\'aticas, 
Universidad Michoacana, 
Ciudad Universitaria, Edificio C-3, Morelia, C.P. 58040 Mexico}
\email{
elmar@ifm.umich.mx}

\subjclass[2010]{Primary  *****, Secondary **** }

\keywords{}

\date{}


\maketitle

{\large
\begin{abstract}\normalsize
We construct the join of noncommutative Galois objects (quantum torsors) 
over a Hopf algebra~$H$. To ensure
that the join algebra enjoys the natural (diagonal) coaction of~$H$, 
we braid the tensor product of the Galois objects. Then we show that 
this coaction is principal. Our examples are built from the
 noncommutative torus with the natural free action of the classical
torus, and arbitrary anti-Drinfeld doubles of finite-dimensional  Hopf algebras.
The former yields a 
noncommutative deformation of a non-trivial torus bundle, and the latter 
a finite quantum covering.
\end{abstract}
}

{\baselineskip12pt\tableofcontents}

\section{Introduction and preliminaries}
\setcounter{equation}{0}

In algebraic topology, the join of topological spaces is a fundamental concept. In particular it is used in the celebrated Milnor's construction
of a universal principal bundle~\cite{m-j56}. 
A~noncomutative-geometric generalization of the $n$-fold join $G*\cdots*G$ of a compact Hausdorff topological group $G$, which is the
first step in Milnor's construction, was proposed in~\cite{dhh} with $G$ replaced by Woronowicz's compact quantum group~\cite{w-sl98}. 
Herein our goal  is to provide another noncomutative-geometric version of the join $G*G$ now with $G$ replaced by a quantum torsor.

Just as compact quantum groups are captured by cosemisimple Hopf algebras, quantum torsors are given as Galois objects~\cite{c-s98}, i.e.\ 
comodule algebras with free and ergodic coactions. In particular, every Hopf algebra is a Galois object with its coproduct taken as a coaction.
One can think of Galois objects over Hopf algebras as principal $G$-bundles over a one-point space. This point of view is not very interesting
in the classical setting, but in the noncommutative-geometric framework it unlocks a plethora of new possiblities. 
Among prime examples of quantum torsors is the noncommutative 2-torus~\cite{r-ma90} with the natural action of the classical 2-torus.

To make this paper self-contained and to establish notation and terminology, we begin by recalling the basics of classical joins,
Hopf-Galois coactions~\cite{ss05}, strong connections~\cite{bh04} and the Durdevic braiding. 
In \cite{d-m96}, Durdevic proved that the algebra structure
on the left hand side of the Hopf-Galois canonical map, that is induced from the tensor algebra on its right hand side, 
is given by a braiding generalizing a standard Yetter-Drinfeld braiding of Hopf algebras. This generalization  hides inside the natural
Yetter-Drinfeld module structure, which was earlier observed by Doi and Takeuchi~\cite{dt89} forsaking the braided algebra multiplication.
It is this multiplication that we use to define a braided join algebra.

Hopf-Galois coactions that admit a strong connection
are quantum-group versions of compact principal bundles. 
Therefore we refer to them as principal coactions. Section~\ref{two} contains
the main result of this paper establishing the principality of the natural coaction on our braided join algebra:

\noindent{\bf Theorem 2.5}\ 
\emph{
Let $H$ be a Hopf algebra with bijective antipode. Assume that  $A$ is a bicomodule algebra
and a left and right Galois object over $H$. 
Then the diagonal coaction on the $H$-braided join algebra $A*_HA$ is \emph{principal}.
Furthermore, the coaction-invariant subalgebra  is isomorphic to the unreduced suspension of~$H$.
}

The remaining part of the paper is devoted to examples. In Section~\ref{three},
we unravel the structure of the braided join  of the aforementioned noncommutative 2-torus with itself. One can
view it as a field of noncommutative 4-tori over the unit interval with some collapsing at the endpoints.
Since this join is a noncommutative deformation of a nontrivial 2-torus principal bundle into a 2-torus quantum
principal bundle, it fits perfectly into the new framework for constructing interesting spectral triples \cite{cm08}
proposed recently in~\cite{ds13,dsz14,dz}.

Anti-Drinfeld doubles  were discovered as a tool for describing anti-Yetter-Drinfeld modules~\cite{hkrs04a}.
They  are already right Galois objects over  Drinfeld double Hopf al\-geb\-ras~\cite{d-vg87}. Hence we only needed to invent  left coactions
commuting with  right coactions and making anti-Drinfeld doubles also  left Galois objects. This is our second main result contained
  in the final Section~\ref{four}:

\noindent{\bf Theorem 4.1}\ 
\emph{
Let $H$ be a finite-dimensional Hopf algebra. Then the anti-Drinfeld double $A(H)$ is a bicomodule algebra
and a left and right Galois object over the Drinfeld double~$D(H)$.
}

\noindent
Drinfeld doubles are finite-dimensional Hopf algebras, so that one can think of them as finite quantum groups, and about their braided join
as a finite quantum covering. For a commutative Drinfeld double of dimension~$n$, our join construction would yield
 the set of all  line segments joining every point  in $\{(0,1),...,(0,n)\}$ to every point in $\{(1,1),...,(1,n)\}$. Note that taking the Drinfeld
double of the group Hopf algebra of any finite non-abelian group (e.g., the group $S_3$ of permutations of 3 elements) would already yield a 
noncommutative example.  However, to exemplify the generality of our theory, we choose a finite-dimesional Hopf algebra with 
antipode whose square is not identity.

Since modules over anti-Drinfeld doubles serve as coefficients of Hopf-cyclic ho\-mo\-lo\-gy and cohomology \cite{hkrs04b},
we hope that the aforesaid additional structure on anti-Drinfeld doubles will be  useful in Hopf-cyclic theory. Also, there seems to be
a clear way to generalize our braided join construction to $n$-fold braided joins of principal comodule algebras, 
and  to replace the algebra $C([0,1])$ of all
complex-valued continuous functions on the unit interval  by any algebra with an appropriate ideal structure.
However, this is beyond the scope of this paper (see~\cite{dhw,ddhw}).

\subsection{Classical principal bundles from the join construction}

Let $I=[0,1]$ be the closed unit interval and let $X$ be a topological space. The \emph{unreduced suspension} $\Sigma X$
of $X$ is the quotient of $I\times X$ by the equivalence relation $R_S$ generated by
\begin{align}
&(0,x)\sim (0,x'),\qquad (1,x)\sim (1,x').
\end{align}
Now take another topological space $Y$ and, on the space $I\times X\times Y$, consider the 
 equivalence relation  $R_J$ given by
\begin{equation}
(0,x,y)\sim (0,x',y),\qquad (1,x,y)\sim (1,x,y').
\end{equation}
The quotient space 
$X * Y:=(I\times X\times Y)/R_J$ 
is called the \emph{join} of $X$ and $Y$. It resembles the unreduced suspension of $X\times Y$, 
but with only $X$ collapsed at 0, and only $Y$ 
collapsed at~1. 
\vspace*{-0mm}\begin{figure}[h]
\[
\includegraphics[width=105mm]{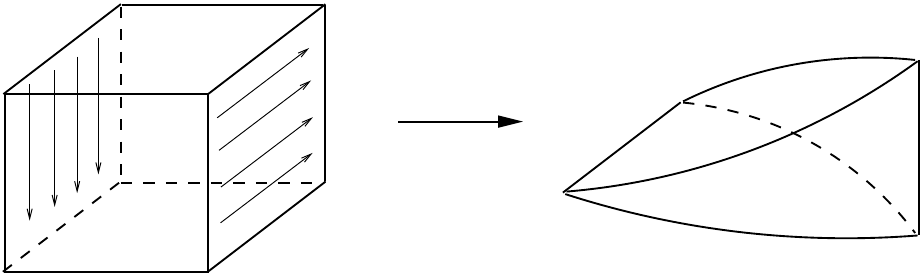}
\nonumber
\]
\end{figure}\vspace*{-0mm}

If $G$ is a topological group acting freely and continuously on  $X$ and $Y$,
then the diagonal \mbox{$G$-action} on $X\times Y$ induces a free continuous action on the join $X*Y$.
 Indeed,
the diagonal action of $G$ on $I\times X\times Y$ factorizes to 
the quotient, so that the formula
\[
([(t,x,y)],g)\longmapsto [(t,xg,yg)]
\]
makes $X*Y$ a right $G$-space. It is immediate that this action is free and continuous. 

On the other hand, let us take $X=Y$, and assume that we have a continuous map
$X\times X\stackrel{\phi}{\to} X$ such that for all $x\in X$ the maps
\[
X\ni y\longmapsto \phi(x,y)\in X\quad\text{and}\quad X\ni y\longmapsto \phi(y,x)\in X
\]
are homeomorphisms. Then, by \cite[Proposition~{VII.8.8}]{b-ge93}, the formula
\[\label{bredon}
\pi\colon X*X\ni [(t,x,y)]\longmapsto [(t,\phi(x,y))]\in\Sigma X
\]
defines a continuous surjection making the join $X*X$ a locally trivial fiber bundle over the unreduced suspension
$\Sigma X$ with the typical fiber~$X$.

In particular, we can combine the above described two cases of join constructions and take $X=G=Y$, 
where $G$ is a compact Hausdorff topological group. The diagonal action of $G$ on $G\times G$ yields
a free $G$-action on $G*G$ that is automatically proper due to the compactness of~$G$. Furthemore, taking
\[
\phi\colon G\times G\ni (g,h)\longmapsto gh^{-1}\in G,
\]
 we conclude that  $G*G$ is a locally trivial fiber bundle
over the unreduced suspension $\Sigma G$ with the typical fiber~$G$. Thus 
the join $G*G$ is a principal $G$-bundle with
\[
\pi\colon G*G\ni[(t,g,h)]\longmapsto [(t,gh^{-1})]\in\Sigma G.
\]
It is known that, since such a bundle is trivializable
if and only if $G$ is contractible, any non-trivial
compact Hausdorff topological group $G$ yields a non-trivializable
principal $G$-bundle over the unreduced suspension~$\Sigma G$.
For example, one can obtain in this way the
fibrations $S^7 \rightarrow S^4$, $S^3 \rightarrow S^2$   and 
$S^1 \rightarrow \mathbb{R}P^1$  
using $G=SU(2)$, $G=U(1)$ and  $\mathbb{Z}/2\mathbb{Z}$, respectively.

\subsection{Left and right Hopf-Galois coactions}

Let $\H$ be a Hopf algebra with coproduct~$\Delta$, counit~$\eps$ and 
antipode~$S$. Next, let $\Delta_P\colon P\to P\ot H$ be a coaction making $P$ a right \mbox{$H$-comodule} algebra,
and let ${}_Q\Delta\colon Q\to H\ot Q$ be a coaction making $Q$ a left $H$-comodule algebra. 
We shall frequently use
the Heyneman-Sweedler notation (with the summation sign suppressed) for coproduct and coactions:
\[
\Delta (h)=:h_{(1)}\ot h_{(2)}\,,\quad
\Delta _P(p)=:p_{(0)}\ot p_{(1)}\,,\quad
{}_Q\Delta (q) =:  q_\minusone \otimes q_\zero\,.
\]

Furthermore, let us define the coaction-invariant subalgebras:
\[
B:=P^{\mathrm{co}H} := \{ p \in P \; | \; \Delta _P (p) = p \otimes 1 \},\quad
D:={}^{\mathrm{co}H} Q := \{  q \in Q \; | \; {}_Q\Delta (q) = 1 \otimes q  \}.
\]
 We call a right (respectively left) coaction \emph{Hopf-Galois} \cite{ss05} iff the right (respectively left)  canonical map 
\begin{align}
\can_P : P \underset{B}{\otimes} P\ni p\otimes p'&\longmapsto pp'_{(0)}\otimes p'_{(1)}\in P \otimes H,
\\
{}_Q\can: Q \underset{D}{\otimes} Q\ni q \otimes q'&\longmapsto q_\minusone \otimes q_\zero q'\in H \otimes Q,
\label{can}\end{align} 
is a bijection. Observe that $\can_P$ is left linear over $P$ and right linear over $P^{\mathrm{co}H}$, whereas
${}_Q\can$ is left linear over ${}^{\mathrm{co}H} Q$ and right linear over~$Q$.

Now we  focus on left Hopf-Galois coactions. First, we define the left \emph{translation map}
\begin{equation}\label{ltrans}
\tau : H \longrightarrow Q \underset{D}{\otimes} Q, \quad \tau (h) := {}_Q\can^{-1} (h \otimes 1) 
=: h^\sqone \otimes  h^\sqtwo.
\end{equation} 
Note that, since ${}_Q\can$ is right $Q$-linear, so is ${}_Q\can^{-1}$. Therefore we obtain 
\[\label{leftq}
{}_Q\can^{-1} (h \otimes q) =h^\sqone \otimes  h^\sqtwo q.
\]
For the sake of clarity and completeness, herein we derive basic properties of the left translation map 
that are well known for the right translation map (the inverse of the right canonical map restricted to~$H$).
\bpr[cf.\ Remark 3.4 in \cite{s-hj90}]
Let ${}_Q\Delta\colon Q\to H\ot Q$ be a left Hopf-Galois coaction. Then,
for all $h,k\in H$ and $q\in Q$, the following equalities hold:
\begin{eqnarray}  
&&{q_\minusone}^\sqone \otimes {q_\minusone}^\sqtwo  q_\zero = q \otimes 1, \label{c}\\
&&  h^\sqone{}_{(-1)} \otimes h^\sqone{}_{(0)}\,  h^\sqtwo  =h \otimes 1, \label{SH}\\
&& h^\sqone\,h^\sqtwo = \vare(h), \label{ec}\\
&& (hk)^\sqone \otimes (hk)^\sqtwo 
= h^\sqone k^\sqone \otimes k^\sqtwo h^\sqtwo, \label{ac}\\
&& {h^\sqone}_\minusone \otimes {h^\sqone}_\zero \otimes h^\sqtwo    \label{lcl}
 = h_\one \otimes {h_\two}^\sqone \otimes {h_\two}^\sqtwo,\\
&&h^\sqone \otimes  {h^\sqtwo}_\minusone \otimes {h^\sqtwo}_\zero   \label{lacl}
= {h_\one}^\sqone \otimes S( h_\two ) \otimes {h_\one}^\sqtwo.
 \end{eqnarray}
 \epr
\begin{proof}
The  first identity \eqref{c} follows from \eqref{leftq} and ${}_Q\can^{-1}\circ {}_Q\can=\id$. The second equality
\eqref{SH} is an immediate
consequence of ${}_Q\can\circ {}_Q\can^{-1}=\id$.
Applying $\varepsilon\ot\id$ to \eqref{SH}  yields~\eqref{ec}. Since ${}_Q\can$ is injective, applying  it to both sides of \eqref{ac}, 
and using \eqref{SH} twice 
on the right hand side, proves \eqref{ac}. 
Transforming the left $H$-covariance of the canonical map ${}_Q\can$ 
\[
 (\id\ot{}_Q\can)   \circ ({}_Q\Delta \ot \id)   =  (\Delta \ot \id)\circ {}_Q\can
\]
to
\[
({}_Q\Delta \ot \id)\circ {}_Q\can^{-1}  =    (\id\ot{}_Q\can^{-1} )   \circ(\Delta \ot \id)
\]
we obtain the left $H$-covariance~\eqref{lcl}. 

To show the right $H$-covariance \eqref{lacl}, we apply the bijective map $(\id\ot \can_L)\circ (\text{flip}\ot \id)$ to both sides of~\eqref{lacl}. 
On the right hand side, we get 
\[
 S( h_\two )  \otimes {h_\one}^\sqone{}_{(-1)}  \otimes  {h_\one}^\sqone{}_{(0)}\,   {h_\one}^\sqtwo = 
 S( h_\two )  \otimes h_\one \ot 1.
\label{Sh1}
\]
Taking into account the left covariance \eqref{lcl}, the left hand side yields 
\[
{h^\sqtwo}_\minusone \otimes h^\sqone{}_{(-1)} \otimes h^\sqone{}_{(0)}\,{h^\sqtwo}_\zero =
{h_{(2)}{}^\sqtwo}{}_\minusone \otimes h_{(1)} \otimes h_{(2)}{}^\sqone\, {h_{(2)}{}^\sqtwo}{}_\zero . 
\] 
Thus \eqref{lacl} is equivalent to the equality
\[ 
S(h) \ot 1 =
{h^\sqtwo}_\minusone \otimes  h^\sqone\, {h^\sqtwo}_\zero.
\]
Finally, using \eqref{SH}, we compute
\begin{align}
{h^\sqtwo}_\minusone \otimes  h^\sqone\, {h^\sqtwo}_\zero 
&=\vare(h^\sqone{}_{(-1)}) \,{h^\sqtwo}_\minusone \otimes  h^\sqone{}_\zero\, {h^\sqtwo}_\zero \nonumber\\
&=S(h^\sqone{}_{(-2)}) \, h^\sqone{}_{(-1)} \,{h^\sqtwo}_\minusone \otimes  h^\sqone{}_\zero\, {h^\sqtwo}_\zero \nonumber\\ 
&= \big(S(h^\sqone{}_{(-1)})\ot 1\big)\,{}_Q\Delta (h^\sqone{}_\zero\, {h^\sqtwo})\nonumber\\
&= S(h) \ot 1
\end{align}
proving~\eqref{lacl}.
\end{proof}

\subsection{Principal right coactions}

Principal coactions are Hopf-Galois coactions with additional properties~\cite{bh04}.
 One can easily prove (see \cite[p.~599]{hkmz11} and references therein) that
a comodule algebra is principal if and only if it admits a strong 
connection. Therefore we will treat the existence of a strong connection 
as a condition defining the principality of a comodule algebra and avoid the original
definition of a principal coaction~\cite{bh04}. The latter is important when going beyond coactions that are
algebra homomorphisms --- then the existence of a strong connection is implied by
principality \cite{bh04} but we do not have the reverse implication.

\begin{definition}[\cite{bh04}]
Let $ H$ be a Hopf algebra with  bijective antipode.
A \emph{strong connection $\ell$} on $ P$ is a unital linear map $\ell : H \rightarrow  P \otimes  P$ satisfying:
\begin{enumerate}
\item 
$(\mathrm{id}\otimes \Delta_ P) \circ 
\ell = (\ell \otimes \mathrm{id}) \circ \Delta$,
$(\Delta_P^L \otimes \mathrm{id}) \circ 
\ell = (\mathrm{id} \otimes \ell) \circ
\Delta$, where 
$\Delta_P^L :=(S^{-1}\otimes\id)\circ\mathrm{flip}\circ\Delta_P$;
\item 
$\widetilde{\can} \circ \ell=1 \otimes \mathrm{id}$, where
$\widetilde{\can}\colon  P\otimes  P\ni p\otimes q\mapsto (p\ot 1)\Delta_P(q)\in  P\otimes H$.
\end{enumerate}
\end{definition}
\noindent
We will use the  Heyneman-Sweedler-type notation 
\begin{equation}\label{heysweed}
\ell(h)=:\ell(h)^{\langle 1 \rangle}\otimes 
 \ell(h)^{\langle 2 \rangle}=:h^{\langle 1 \rangle}\otimes 
 h^{\langle 2 \rangle}
\end{equation}
 with the summation sign suppressed. For the sake of brevity, we also suppress $\ell$ when it is clear which strong connection it refers to.

\subsection{Left Durdevic braiding}

Let ${}_Q\Delta\colon Q\to H\ot Q$ be a left Hopf-Galois coaction, and $D$ the coaction-invariant subalgebra. 
Using the bijectivity of the canonical map ${}_Q\can$,
we pullback the tensor algebra structure on $H\ot Q$ to $Q\ot_D Q$. The thus obtained algebra we shall denote by $Q\utimes_{{}_D} Q$
and call a \emph{left Hopf-Galois braided algebra}.
From the commutativity of the diagram
\[\label{multpull}
\xymatrix{ (Q\utimes_{{}_D} Q)\ot (Q\utimes_{{}_D} Q)\; \ar[r]^-{m_{Q\utimes_{{}_D} Q}} \ar[d]_{{}_Q\can\otimes{}_Q\can} & 
 \;Q\utimes_{{}_D} Q  \ar[d]^{{}_Q\can}\\
 (H \otimes  Q)\ot (H \otimes  Q)\ar[r]^-{m_{H\otimes Q}} & H \otimes  Q \,,
}
\]
we obtain the following explicit formula for the multiplication map $m_{Q\utimes_{{}_D} Q}$:
\begin{align}\label{multbrfor}
m_{Q\utimes_{{}_D} Q}(a\ot b\ot a'\ot b') &= 
{}_Q\can^{-1}\big({}_Q\can(a\ot b)\, {}_Q\can (a'\ot b')\big)\nonumber\\
&= {}_Q\can^{-1}\big(a_{(-1)}a'\!{}_{(-1)} \ot a_{(0)}b a'\!{}_{(0)} b')\big)\nonumber\\
&= a_{(-1)}{}^{[1]} a'\!{}_{(-1)}{}^{[1]} \ot a'\!{}_{(-1)}{}^{[2]} a_{(0)}{}^{[2]}  a_{(0)}b a'\!{}_{(0)} b'\nonumber\\
&= a\, a'\!{}_{(-1)}{}^{[1]} \ot a'\!{}_{(-1)}{}^{[2]} b a'\!{}_{(0)}\, b' .
\end{align}
Here in the last equality we used~\eqref{c}. 

Next, we show that  $m_{Q\utimes_{{}_D} Q}$  is the multiplication in a braided tensor  algebra associated to the left-sided version of
Durdevic's braiding \cite[(2.2)]{d-m96}. Since ${}_Q\can$ is left and right $D$-linear, the following formula defines a left and right
$D$-linear map:
\[\label{ldb}
Q\utimes_{{}_D} Q\ni x \otimes y\stackrel{\P}{\longmapsto} {}_Q\can^{-1}\big((1\ot x)\, {}_Q\can (y\ot 1)\big)=
{y_\minusone}^\sqone \otimes {y_\minusone}^{\sqtwo} x y_\zero\in 
Q\utimes_{{}_D} Q.
\]
Now we can write the multiplication formula \eqref{multbrfor} as
\begin{equation}
\label{braided_product}
m_{Q\utimes_{{}_D} Q}(a\ot b\ot a'\ot b')= a \P( b \otimes a' ) b'=:(a \otimes b) \sbullet ( a' \otimes b') .
\end{equation}
Note that when we view a Hopf algebra $H$ as a left comodule algebra  over itself, then the  left Durdevic braiding~\eqref{ldb}
becomes the Yetter-Drinfeld braiding: 
\begin{equation}
\label{ourbraid}
H\ot H\ni x \otimes y\longmapsto
 y_\one \otimes S( y_\two) x  y_\three\in H\ot H .
\end{equation}
\bpr [cf.\ Proposition~2.1 in \cite{d-m96}]
Let ${}_Q\Delta\colon Q\to H\ot Q$ be a left Hopf-Galois coaction, and $D$ the coaction-invariant subalgebra. Then
the map $\P$ defined in \eqref{ldb} is bijective and enjoys the following properties:
\begin{eqnarray}
m_Q \circ \P &=& m_Q\,,
\label{brai}\\
\forall \; q \in Q:\; \P( q \otimes 1) &=& 1 \otimes q, 
\label{braida}\\
\forall \; q \in Q:\; \P( 1 \otimes q) &=& q \otimes 1, 
\label{braidaa}\\
\P\circ  ( m_Q \otimes \id) &=& (\id \otimes m_Q)\circ  (\P \otimes \id) \circ (\id \otimes \P),
\label{braidb}\\
 \P\circ  (\id \otimes m_Q ) &=& ( m_Q \otimes \id)\circ  (\id \otimes \P)\circ (\P \otimes \id), 
\label{braidc}\\
(\P \otimes \id) \circ   (\id \otimes \P)\circ (\P \otimes \id)  &=&  (\id \otimes \P)\circ  (\P \otimes \id) \circ  (\id \otimes \P).
\label{braid}
\end{eqnarray} 
\epr 
%
%
\begin{pf} 
The bijectivity of $\P$ follows immediately from the fact that ${}_Q\can$ is an algebra isomorphism~\eqref{multpull}.
The braided commutativity of \eqref{brai} is a consequence of~\eqref{ec}. The condition \eqref{braida} is obvious,
 and the sibling condition \eqref{braidaa} is implied by~\eqref{c}. 

To prove \eqref{braidb}, using \eqref{lcl} and \eqref{ec}, we compute 
\begin{align}
&\big((\id \otimes m_Q)\circ(\P \otimes \id) \circ  (\id \otimes \P)\big)(x\ot y\ot z)\nonumber\\
&= \big((\id \otimes m_Q)\circ (\P \otimes \id)\big) (x\ot z_{(-1)}{}^\sqone \ot {z_\minusone}^{\sqtwo} \,y \, z_\zero) 
\nonumber\\
&= 
(\id \otimes m_Q)(z_{(-2)}{}^\sqone \ot z_{(-2)}{}^\sqtwo \, x \,z_{(-1)}{}^\sqone\ot z_{(-1)}{}^\sqtwo\, y \,z_\zero)\nonumber\\
&= z_{(-1)}{}^\sqone \ot z_{(-1)}{}^\sqtwo \, x \, y \,z_\zero \nonumber\\
&=\big(\P\circ(m_Q \ot\id)\big)(x\ot y\ot z) \,.
\label{brb}
\end{align}

Much in the same way,  to prove \eqref{braidc}, using \eqref{ac}, we compute 
\begin{align}
&\big(( m_Q \otimes \id)\circ(\id \otimes \P) \circ (\P \otimes \id) \big)(x\ot y\ot z)\nonumber\\
&=  \big(( m_Q \otimes \id)\circ(\id \otimes \P)\big) (y_{(-1)}{}^\sqone \ot {y_\minusone}^{\sqtwo} \,x \, y_\zero \ot z) 
\nonumber\\
&= \label{brc}
( m_Q \otimes \id)( y_{(-1)}{}^\sqone \ot z_{(-1)}{}^\sqone \ot z_{(-1)}{}^\sqtwo\, {y_\minusone}^{\sqtwo} \,x \, y_\zero\,  z_\zero)
 \nonumber\\
&={(yz)_{(-1)}}^\sqone \ot {(yz)_{(-1)} }^{\sqtwo} \,x \, (yz)_\zero\nonumber\\ 
& = \big(\P\circ  (\id \ot m_Q)\big)(x\ot y\ot z). 
\end{align}

Finally, to show \eqref{braid} first we apply ${}_Q\can\ot \id$ to its left hand side and,
taking advantage of the fact that above we have 
already computed $(\id \otimes \P) \circ (\P \otimes \id)$,
we proceed as follows:
\begin{align}
&\big(({}_Q\can\ot \id) \circ(\P \otimes \id) \circ   (\id \otimes \P)\circ (\P \otimes \id) \big)(x\ot y\ot z)\nonumber\\
&=
\big(({}_Q\can\ot \id) \circ(\P \otimes \id)\big)
( y_{(-1)}{}^\sqone \ot z_{(-1)}{}^\sqone \ot z_{(-1)}{}^\sqtwo\, {y_\minusone}^{\sqtwo} \,x \, y_\zero\,  z_\zero)\nonumber\\
&=
z_{(-2)}\ot y_{(-1)}{}^\sqone z_{(-1)}{}^\sqone \ot z_{(-1)}{}^\sqtwo\, {y_\minusone}^{\sqtwo} \,x \, y_\zero\,  z_\zero\,.
\end{align}
Here in the last equality we used~\eqref{ldb}.

Again much in the same way, taking advantage of the fact that above we have 
already computed $(\P \otimes \id) \circ  (\id \otimes \P)$, we apply ${}_Q\can\ot \id$ to the right hand side of \eqref{braid}, and
 proceed as follows:
\begin{align}
&
\big(({}_Q\can\ot \id)\circ  (\id \otimes \P)\circ  (\P \otimes \id) \circ  (\id \otimes \P) \big)(x\ot y\ot z)
\nonumber\\ &=
\big(({}_Q\can\ot \id) \circ(\id \otimes \P)\big)
(z_{(-2)}{}^\sqone \ot z_{(-2)}{}^\sqtwo \, x \,z_{(-1)}{}^\sqone\ot z_{(-1)}{}^\sqtwo\, y \,z_\zero)
\nonumber\\ &=
({}_Q\can\ot \id) \big(z_{(-4)}{}^\sqone \ot \tau\big(S(z_{(-2)}) y_{(-1)} \,z_{(-1)}\big)
z_{(-4)}{}^\sqtwo \, x \,z_{(-3)}{}^\sqone z_{(-3)}{}^\sqtwo\, y_\zero \,z_\zero \big)
\nonumber\\ &= 
({}_Q\can\ot \id) \big(z_{(-3)}{}^\sqone \ot 
\tau\big(S(z_{(-2)}) y_{(-1)} \,z_{(-1)} \big)
z_{(-3)}{}^\sqtwo \, x \, y_\zero \,z_\zero \big)
\nonumber\\ &= 
z_{(-4)} \ot   z_{(-3)}{}^\sqone \big(S(z_{(-2)}) y_{(-1)} \,z_{(-1)} \big){}^\sqone
\ot
\big(S(z_{(-2)}) y_{(-1)} \,z_{(-1)} \big){}^\sqtwo  \,
z_{(-3)}{}^\sqtwo \, x \, y_\zero \,z_\zero 
\nonumber\\ &= 
z_{(-4)} \ot   \big(z_{(-3)}S(z_{(-2)}) y_{(-1)} \,z_{(-1)} \big){}^\sqone
\ot
\big(z_{(-3)}S(z_{(-2)}) y_{(-1)} \,z_{(-1)} \big){}^\sqtwo  \,
x \, y_\zero \,z_\zero
\nonumber\\ &= 
z_{(-2)} \ot   y_{(-1)}{}^\sqone \,z_{(-1)}{}^\sqone \ot
z_{(-1)}{}^\sqtwo y_{(-1)}{}^\sqtwo \, x \, y_\zero \,z_\zero \,.
\end{align}
Here  we consecutively used \eqref{lacl}, \eqref{ec}, \eqref{lcl} and  \eqref{ac}. 
 Since ${}_Q\can\ot \id$ is bijective, this proves~\eqref{braid}.
\end{pf}

\section{Braided  principal join comodule algebras}\label{two}
\setcounter{equation}{0}

\subsection{Left braided right comodule algebras}

Now we shall consider left and right coactions simultaneously. Let $A$ be an $H$-bicomodule algebra, i.e.\ a left
and right $H$-comodule algebra with commuting coactions:
$({}_A\Delta\ot \id)\circ \Delta_A = (\id\ot \Delta_A) \circ {}_A\Delta$.
This coassociativity allows us to use the Heyneman-Sweedler notation over integers:
\[\label{bicom}
\big(({}_A\Delta\ot \id)\circ \Delta_A\big)(a) =a_{(-1)}\ot a_{(0)}\ot a_{(1)} =\big((\id\ot \Delta_A) \circ {}_A\Delta\big)(a).
\]

\begin{lemma} \label{unbraid}
Let $H$ be a Hopf algebra and  $A$ be a bicomodule algebra over $H$. Also, assume that the left coaction is Hopf-Galois, and that
the left and right coaction-invariant subalgebras coincide: 
${}^{\mathrm{co}H}\!A=A^{\mathrm{co}H}=:B$.
Let 
 $A \, \utimes_{{}_B} \, A$ be a left Hopf-Galois braided algebra.
Then the left canonical map \eqref{can}
is an isomorphism of right $H$-comodule algebras intertwining the  coactions given by the formulas
\vspace*{-2mm}\begin{align*}
&\Delta_{A\utimes_{{}_B}A}(a\utimes b):=a_{(0)}\utimes b_{(0)}\ot a_{(1)}b_{(1)}\,,\\
& \Delta_{H\ot A} (h\ot a):=(\id\ot\Delta_A)(h\ot a)= h\ot a_\zero \ot a_\one\,.
\end{align*}
\end{lemma}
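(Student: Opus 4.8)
The plan is to verify, in order: (i) that $\Delta_{H\ot A}$ indeed makes $H\ot A$ a right $H$-comodule algebra; (ii) that ${}_A\can$ is an algebra homomorphism from $A\utimes_{{}_B}A$ to $H\ot A$; and (iii) that ${}_A\can$ intertwines $\Delta_{A\utimes_{{}_B}A}$ with $\Delta_{H\ot A}$. Item (ii) is essentially built into the construction: by the very definition of the braided algebra $A\utimes_{{}_B}A$ (the pullback of the tensor-algebra structure on $H\ot A$ along the bijection ${}_A\can$, see the diagram \eqref{multpull}), the map ${}_A\can$ is a unital algebra isomorphism onto $H\ot A$ equipped with its factorwise product $m_{H\ot A}$. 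So (ii) requires only a remark, not a computation. Item (i) is routine: $\Delta_{H\ot A}=\id\ot\Delta_A$ is coassociative and counital because $\Delta_A$ is, and it is multiplicative because $\Delta_A$ is an algebra map and the product on $H\ot A$ is factorwise; I would dispatch this in one line.

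The substance is item (iii). First I would check that $\Delta_{A\utimes_{{}_B}A}$ is well defined on the relative tensor product over $B$, i.e.\ that it descends from $A\ot A$ to $A\ot_B A$. This needs $B$ to sit inside the kernel of the relevant difference; it follows from $B={}^{\co H}\!A=A^{\co H}$, since for $b\in B$ we have $a_{(0)}b_{(0)}\ot\cdots$ collapsing correctly because $b$ is invariant under both coactions. Then the core identity to prove is
\begin{equation*}
(\,{}_A\can\ot\id_H\,)\circ\Delta_{A\utimes_{{}_B}A}
=\Delta_{H\ot A}\circ{}_A\can,
\end{equation*}
which, evaluated on $a\utimes b$, reads
\begin{equation*}
a_{(0)}{}_{(-1)}\ot a_{(0)}{}_{(0)}\,b_{(0)}\ot a_{(1)}b_{(1)}
= a_{(-1)}\ot a_{(0)}\,b_{(-1)}{}'\dots
\end{equation*}
— here I would unwind the left-hand side as $a_{(-1)}\ot a_{(0)}b_{(0)}\ot a_{(1)}b_{(1)}$ using the bicomodule Heyneman--Sweedler convention \eqref{bicom}, and the right-hand side, $(\id\ot\Delta_A)({}_A\can(a\ot b))=(\id\ot\Delta_A)(a_{(-1)}\ot a_{(0)}b)=a_{(-1)}\ot (a_{(0)}b)_{(0)}\ot(a_{(0)}b)_{(1)}$, and then both sides agree once we use that $\Delta_A$ is an algebra map, $\Delta_A(a_{(0)}b)=a_{(0)(0)}b_{(0)}\ot a_{(0)(1)}b_{(1)}$, together with the commutation $({}_A\Delta\ot\id)\circ\Delta_A=(\id\ot\Delta_A)\circ{}_A\Delta$ that lets me write $a_{(0)(0)}\ot a_{(0)(1)}=a_{(0)}\ot a_{(1)}$ with $a_{(-1)}$ already split off. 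This is a short manipulation with the two commuting coactions and multiplicativity of $\Delta_A$.

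The only point requiring genuine care — and what I expect to be the main obstacle — is the interplay between the \emph{braided} multiplication on $A\utimes_{{}_B}A$ and the proposed coaction: one must be sure that $\Delta_{A\utimes_{{}_B}A}$ is an algebra homomorphism for the braided product $\sbullet$ of \eqref{braided_product}, not merely a coassociative map. The clean way around this is to avoid checking it directly on $\sbullet$ and instead transport everything through the isomorphism ${}_A\can$: once (iii) is established as the intertwining identity above, multiplicativity of $\Delta_{A\utimes_{{}_B}A}$ follows \emph{for free} from multiplicativity of $\Delta_{H\ot A}$ (item (i)) and the fact that ${}_A\can$ is an algebra isomorphism. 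Thus the logical order is: show $\Delta_{H\ot A}$ is a comodule-algebra structure; show ${}_A\can$ intertwines the coactions; conclude that $\Delta_{A\utimes_{{}_B}A}$ is automatically a right $H$-comodule algebra structure and that ${}_A\can$ is an isomorphism of right $H$-comodule algebras. Finally I would note $\Delta_{A\utimes_{{}_B}A}$ is coassociative and counital either by the same transport argument or by a direct one-line check, completing the proof.
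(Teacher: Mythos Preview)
Your proposal is correct and follows essentially the same route as the paper: verify the intertwining identity $({}_A\can\ot\id)\circ\Delta_{A\utimes_{{}_B}A}=\Delta_{H\ot A}\circ{}_A\can$ via the bicomodule property \eqref{bicom} and multiplicativity of $\Delta_A$, then deduce that $\Delta_{A\utimes_{{}_B}A}$ is an algebra map by transporting the (obvious) multiplicativity of $\Delta_{H\ot A}$ through the algebra isomorphism ${}_A\can$. The paper's proof is just your item (iii) followed by the transport conclusion; your additional remarks on well-definedness over $B$ and coassociativity are sound but not spelled out there.
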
  
\begin{proof}
To verify the commutativity of the diagram
\[\label{comultpull}
\xymatrix{ 
A\utimes_{{}_B} A
\; \ar[r]^-{\Delta_{A\utimes_{{}_B} A}} \ar[d]_{{}_A\can} 
& 
 \;(A\utimes_{{}_B} A)\ot H  \ar[d]^{{}_A\can\ot\id}\\
 H \otimes  A\ar[r]^-{\Delta_{H\otimes A}} & (H \otimes A) \ot H \,,
}
\]
 for any $a,a'\in A$, using \eqref{bicom}, we compute:
\begin{align}
\big(( {}_A\can \ot\id)\circ\Delta_{A\utimes_{{}_B}A}\big)(a\utimes a')&= ({}_A\can\ot\id)(a_{(0)}\utimes a'_{(0)} \ot a_{(1)}  a'_{(1)})
\nonumber\\
&=a_{(-1)}\ot a_{(0)} a'\!{}_{(0)} \ot a_{(1)}  a'\!{}_{(1)}
\nonumber\\
&=(\id\ot\Delta_A)(a_{(-1)}\ot a_{(0)} a')
\nonumber\\
&=(\Delta_{H\ot A}\circ {}_A\can)( a \utimes a').
\end{align}
This shows that ${}_A\can$ is right $H$-colinear. Also, since ${}_A\can$ and $\Delta_{H\ot A}$
are algebra homomorphisms  and ${}_A\can$ is bijective, we conclude from the commutativity of the diagram
\eqref{comultpull} that the diagonal coaction $\Delta_{A\utimes_{{}_B}A}$ is an algebra homomorphism.
\end{proof}

\subsection{Braided join comodule algebras}

We begin by specializing the left Durdevic
 braiding \eqref{ldb} to left Galois objects. This means that now not only we assume that the left canonical map ${}_A\can$
 is bijective, but also that  the coaction-invariant subalgebra $^{coH}\!A$ is the ground field.  Therefore, we can simplify our
notation for the left  Hopf-Galois braided algebra to $A\utimes A$. To preserve the topological meaning of our
join construction in the commutative setting, from now on we specialize our ground field to be the field of complex numbers.

\begin{definition}\label{bjoin}
Let $H$ be a Hopf algebra over $\mathbb{C}$ and $A$ be a bicomodule algebra over $H$. 
Assume that $A$ is a left Galois object over $H$ and
 $A \, \utimes \, A$ is a left Hopf-Galois braided algebra. We call the
unital $\mathbb{C}$-algebra
$$
A\underset{H}{*}A:=\{x\in C([0,1])\otimes A\underline\otimes A\;|\; 
(\mathrm{ev}_0\otimes\id)(x)\in\mathbb{C}\otimes A\text{ and }
(\mathrm{ev}_1\otimes\id)(x)\in A\otimes \mathbb{C}\}
$$
the \emph{$H$-braided  join algebra of $A$}.
Here $\mathrm{ev}_r$ is the evaluation map at $r\in [0,1]$, i.e.\
$\mathrm{ev}_r(f)=f(r)$.
\end{definition}

\begin{lemma}
Let $A*_HA$ be the $H$-braided  join algebra of $A$.
Then the formula
$$
C([0,1])\otimes A\underline\otimes A\ni f\otimes a\otimes b\longmapsto 
f\otimes a_{(0)}\otimes b_{(0)}\otimes a_{(1)}b_{(1)}\in
C([0,1])\otimes A\underline\otimes A\otimes H
$$
restricts to $\Delta_{A*_HA}\colon A*_HA\to (A*_HA)\otimes H$ making $A*_HA$ a right 
$H$-comodule algebra.
\end{lemma}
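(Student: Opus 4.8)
The plan is to verify two things: first, that the given linear map on $C([0,1])\otimes A\utimes A$ actually restricts to the subalgebra $A*_HA$, landing in $(A*_HA)\otimes H$; second, that the resulting map $\Delta_{A*_HA}$ is a coassociative, counital, algebra homomorphism, i.e.\ makes $A*_HA$ a right $H$-comodule algebra. Write $\Phi$ for the map $f\otimes a\otimes b\mapsto f\otimes a_\zero\otimes b_\zero\otimes a_\one b_\one$ on $C([0,1])\otimes A\utimes A$. Note that $\Phi$ is nothing but $\id_{C([0,1])}\otimes\Delta_{A\utimes A}$ with $\Delta_{A\utimes A}$ the diagonal coaction from Lemma~\ref{unbraid} (applied here with $B=\mathbb{C}$), so coassociativity, counitality and right $H$-colinearity of $\Phi$ are inherited directly from that lemma, and $\Phi$ is an algebra homomorphism because $\Delta_{A\utimes A}$ is. Thus the only genuine content is the claim that $\Phi$ maps $A*_HA$ into $(A*_HA)\otimes H$.

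For that, I would first observe that $\Phi$ is $C([0,1])$-linear and commutes with the evaluation maps in the obvious sense: $(\ev_r\otimes\id_{A\utimes A\otimes H})\circ\Phi = (\id\otimes\Delta_{A\utimes A})\circ(\ev_r\otimes\id)$ on $C([0,1])\otimes A\utimes A$, for each $r\in[0,1]$. Hence it suffices to check the two endpoint conditions. If $x\in A*_HA$ then $(\ev_0\otimes\id)(x)\in\mathbb{C}\otimes A$, say equal to $1\otimes a$ (identifying $\mathbb{C}\otimes A$ with $A$ sitting as the second tensor factor of $A\utimes A$, i.e.\ $1\utimes a$). Then $(\ev_0\otimes\id)(\Phi(x)) = \Delta_{A\utimes A}(1\utimes a) = 1\utimes a_\zero\otimes a_\one$, which lies in $(\mathbb{C}\otimes A)\otimes H$, exactly the required condition at $0$ for $\Phi(x)$ to be in $(A*_HA)\otimes H$. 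Symmetrically, if $(\ev_1\otimes\id)(x) = a\otimes 1 = a\utimes 1$, then $(\ev_1\otimes\id)(\Phi(x)) = \Delta_{A\utimes A}(a\utimes 1) = a_\zero\utimes 1\otimes a_\one \in (A\otimes\mathbb{C})\otimes H$, the required condition at $1$. Therefore $\Phi(x)\in (A*_HA)\otimes H$, and we set $\Delta_{A*_HA}:=\Phi|_{A*_HA}$.

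It then remains to record that $\Delta_{A*_HA}$ inherits all the structure maps' properties from $\Phi$: coassociativity $(\Delta_{A*_HA}\otimes\id)\circ\Delta_{A*_HA} = (\id\otimes\Delta)\circ\Delta_{A*_HA}$ and counitality $(\id\otimes\eps)\circ\Delta_{A*_HA}=\id$ hold because they hold for $\Phi$ (being $\id\otimes\Delta_{A\utimes A}$ and $\Delta_{A\utimes A}$ a genuine coaction), and multiplicativity of $\Delta_{A*_HA}$ holds because $A*_HA$ is a subalgebra of $C([0,1])\otimes A\utimes A$ on which the algebra homomorphism $\Phi$ restricts. Hence $A*_HA$ is a right $H$-comodule algebra.

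The only step requiring any care — and the one I would treat as the main obstacle — is the bookkeeping around the endpoint conditions: one must be slightly attentive that the identifications $\mathbb{C}\otimes A\hookrightarrow A\utimes A$ and $A\otimes\mathbb{C}\hookrightarrow A\utimes A$ used in Definition~\ref{bjoin} are compatible with $\Delta_{A\utimes A}$ in the sense that $\Delta_{A\utimes A}$ sends $1\utimes a$ to $1\utimes a_\zero\otimes a_\one$ and $a\utimes 1$ to $a_\zero\utimes 1\otimes a_\one$; this is immediate from the explicit formula $\Delta_{A\utimes A}(a\utimes b)=a_\zero\utimes b_\zero\otimes a_\one b_\one$ together with the fact that $1\in A$ is invariant under both coactions ($1_\zero\otimes 1_{(\pm1)}=1\otimes 1$), but it is the point where the braided tensor product $\utimes$ (as opposed to the ordinary one) could in principle cause trouble and so deserves an explicit remark. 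Everything else is a direct transcription of Lemma~\ref{unbraid}.
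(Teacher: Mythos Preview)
Your proof is correct and follows essentially the same approach as the paper: the paper's proof also reduces to checking that the evaluation at $r=0$ and $r=1$ of $\Phi(x)$ lands in $\mathbb{C}\otimes A\otimes H$ and $A\otimes\mathbb{C}\otimes H$ respectively, using that $\Phi$ commutes with the evaluation maps. You are in fact more explicit than the paper in recording why the comodule-algebra axioms (coassociativity, counitality, multiplicativity) are inherited from Lemma~\ref{unbraid}; the paper leaves this implicit and only writes out the endpoint verification.
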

\begin{proof}
Let $\sum_i f_i\ot a_i\ot b_i\in A*_HA$, i.e.\ $\sum_i f_i(0)\,a_i\ot b_i\in \mathbb{C}\ot A$ and
$\sum_i f_i(1)\,a_i\ot b_i\in A\ot \mathbb{C}$. Then 
\begin{gather}
(\mathrm{ev}_r\ot\id)\Big(\sum_i f_i\ot (a_i)_{(0)}\utimes (b_i)_{(0)}\ot (a_i)_{(1)}(b_i)_{(1)}\Big)\\
=\sum_i \big(f_i(r)a_i\big)_{(0)}\utimes (b_i)_{(0)}\ot \big(f_i(r)a_i\big)_{(1)}(b_i)_{(1)}\,.\nonumber
\end{gather}
For $r=0$ the above tensor belongs to $\mathbb{C}\ot A\ot H$, and for $r=1$ the above tensor belongs to $A\ot\mathbb{C}\ot H$.
\end{proof}

\subsection{Pullback structure and principality}

In order to compute the coaction-invariant subalgebra, and to show that the principality of the right $H$-coaction 
on $A$ implies the principality of the right diagonal $H$-coaction on $A*_HA$, we present
$A*_HA$ as a pullback of right $H$-comodule algebras. Define 
\begin{align} \label{A1} 
A_1&:= \{f\in C([0,\mbox{$\frac{1}{2}$}])\otimes A\underline\otimes A\;|\; 
(\mathrm{ev}_0\otimes\id)(f)\in\mathbb{C}\utimes A\},\\
A_2&:= \{g\in C([\mbox{$\frac{1}{2}$},1])\otimes A\underline\otimes A\;|\; 
(\mathrm{ev}_1\otimes\id)(g)\in A\utimes\mathbb{C}\}. \label{A2} 
\end{align}
Then  $A*_HA$  is isomorphic to the pullback of $A_1$ and $A_2$ 
over $A_{12} := A\utimes A$ along the right $H$-colinear evaluation maps 
\[ \label{maps}
\pi_1:=\mathrm{ev}_\frac{1}{2}\otimes\id\,:\,A_1 \longrightarrow A_{12},\qquad 
\pi_2:=\mathrm{ev}_\frac{1}{2}\otimes\id\,:\,A_2 \longrightarrow A_{12}. 
\]
By Lemma \ref{unbraid}, ${}_A\can$ is a right $H$-comodule algebra isomorphism 
${}_A\can: A\utimes A \ra H\ot A$.  Also, we have ${}_A\can (\mathbb{C}\utimes A)= \mathbb{C}\ot A$ and 
${}_A\can(A\utimes \mathbb{C})={}_A\Delta(A)$. 
Next we note that the  right $H$-comodule algebras $A_1$ and $A_2$ 
are  isomorphic to 
\begin{align} \label{B1} 
B_1&:= \{f\in C([0,\mbox{$\frac{1}{2}$}])\otimes H\otimes A\;|\; 
(\mathrm{ev}_0\otimes\id)(f)\in\mathbb{C}\ot A\},\\
B_2&:= \{g\in C([\mbox{$\frac{1}{2}$},1])\otimes H\otimes A\;|\; 
(\mathrm{ev}_1\otimes\id)(g)\in {}_A\Delta(A)\}, \label{B2}
\end{align}
respectively. 

Since $\Delta_A(a)=a\ot 1$ implies that $a\in\mathbb{C}$, we obtain
\begin{align} \label{B1inv} 
B_1^{\,\mathrm{co}H}&:= \{f\in C([0,\mbox{$\frac{1}{2}$}])\otimes H\otimes \bC\;|\; 
f(0)\in\bC\},\\
\label{B2inv}
B_2^{\,\mathrm{co}H}&:= \{g\in C([\mbox{$\frac{1}{2}$},1])\otimes H\otimes \bC\;|\; 
g(1)\in\bC\}.  
\end{align}
In both cases, these algebras are isomorphic to the unreduced cone of $H$. As a result the coaction-invariant subalgebra
of $A*_HA$
is  isomorphic to the unreduced suspension of $H$, i.e.
\[\label{coinv}
(A*_HA)^{\,\mathrm{co}H}\cong\Sigma H:=\{g\in C([0,1])\otimes H\;|\; g(0),g(1)\in\bC\}\,.
\]

\begin{lemma}  \label{Bp}
Let $H$ be a Hopf algebra with bijective antipode 
and  $A$ be a bicomodule algebra over $H$. Also, let $A$ be a left and \emph{right} Galois object over $H$,
 and $A \, \utimes \, A$ be a left Hopf-Galois braided algebra.
Then the right $H$-comodule algebras $B_1$ and $B_2$ are principal. 
\end{lemma}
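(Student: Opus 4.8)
The plan is to construct explicit strong connections on $B_1$ and $B_2$ by ``gluing'' a trivial strong connection near the collapsed endpoint to a strong connection transported from $A$ near the other endpoint. Recall that $A$ is a \emph{right} Galois object with bijective antipode, so by principality of right Galois objects (equivalently, by the existence of a strong connection on any Galois object over a Hopf algebra with bijective antipode, cf.\ \cite{bh04}) there is a strong connection $\ell_A\colon H\to A\otimes A$. Likewise, $H$ itself is a right $H$-comodule algebra via $\Delta$, and it carries the canonical strong connection $\ell_H(h)=S(h_{(1)})\otimes h_{(2)}$. The target $B_1\subseteq C([0,\frac12])\otimes H\otimes A$ is a tensor product of the comodule algebra $C([0,\frac12])$ (trivially coacted upon) with $H\otimes A$, but with the boundary condition at $0$ that the $H$-factor collapse to $\bC$; so a strong connection on $B_1$ should be a map $H\to B_1\otimes_{\bC} B_1$ interpolating between the ``collapsed'' connection (where the $H$-leg is killed) at $t=0$ and the tensor-product connection $\ell_H\otimes\ell_A$ (suitably interpreted on $H\otimes A$) for $t$ near $\frac12$.

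Concretely, I would first record the auxiliary fact that if $P$ and $P'$ are right $H$- and right $H'$-comodule algebras with strong connections, then $P\otimes P'$ is a strong-connection-admitting comodule algebra over $H\otimes H'$; here we only need the diagonal case $H'=H$ combined with the multiplication $H\otimes H\to H$, which sends $\ell_H\otimes\ell_A$ to a strong connection $\ell_{H\otimes A}$ on the right $H$-comodule algebra $H\otimes A$ (with the codiagonal coaction $h\otimes a\mapsto h_{(1)}\otimes a_{(0)}\otimes h_{(2)}a_{(1)}$, which under ${}_A\can$ matches the diagonal coaction on $A\utimes A$). Then, for $B_1$, I would pick a continuous function $\chi\colon[0,\frac12]\to[0,1]$ with $\chi(0)=0$, $\chi\equiv 1$ near $\frac12$, and define
\[
\ell_{B_1}(h):=\sum (1-\chi)\,\varepsilon(h_{(1)})1_H\otimes 1_A\otimes 1\cdot\big(\text{trivial piece}\big)\ +\ \chi\otimes \ell_{H\otimes A}(h),
\]
i.e.\ a convex-type combination that at $t=0$ degenerates to the strong connection on the collapsed algebra $\bC\otimes A$ (which is just $\ell_A$) and for $t$ near $\frac12$ equals $\ell_{H\otimes A}$. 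One checks that the colinearity axioms (1) hold because they hold pointwise in $t$ for each summand and are preserved under taking $\bC[t]$-combinations, and that axiom (2), $\widetilde{\can}\circ\ell=1\otimes\id$, holds because $\widetilde{\can}$ is evaluated in the fibre $H\otimes A$ where both pieces are genuine strong connections — the point-0 boundary condition is automatically respected because at $t=0$ the connection lands in $\bC\otimes A$, as required for membership in $B_1$. The case of $B_2$ is entirely symmetric, with the role of $0$ played by $1$ and the collapsed fibre being $A\otimes\bC$ after transport by ${}_A\can$; the relevant boundary condition there, $(\mathrm{ev}_1\otimes\id)(g)\in{}_A\Delta(A)$, is matched because $\ell_H$ composed with $\Delta$ reproduces precisely ${}_A\Delta(A)\subseteq H\otimes A$.

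The main obstacle I anticipate is not the colinearity or the canonical-map identity, both of which are ``fibrewise'' and hence routine, but rather verifying that the glued map actually \emph{lands in} $B_i\otimes B_i$ — that is, that both tensor legs separately satisfy the endpoint collapse condition at the appropriate value of $t$, and that the function-algebra factor $C([0,\frac12])$ (resp.\ $C([\frac12,1])$) is handled correctly when one is only allowed polynomial/continuous combinations in $t$ (one must use that strong connections may be built using $C([0,\frac12])$-valued, not merely $\bC$-valued, coefficients, which is legitimate since $C([0,\frac12])$ sits centrally and $H$-trivially). A secondary subtlety is checking unitality of $\ell_{B_i}$, i.e.\ $\ell_{B_i}(1)=1\otimes 1$; this forces the normalization $\chi$-combination to be genuinely convex ($(1-\chi)+\chi=1$) rather than an arbitrary sum, and one must confirm that the ``trivial piece'' at $t=0$ is $1_A\otimes 1_A$ and matches $\ell_A(1)$. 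Once these membership and unitality bookkeeping points are settled, the proof concludes by invoking the characterization (cited in the excerpt, \cite{hkmz11}) that admitting a strong connection is equivalent to principality, giving principality of both $B_1$ and $B_2$.
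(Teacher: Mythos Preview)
Your proposal rests on a misreading of the coaction. By Lemma~\ref{unbraid} the right $H$-coaction on $H\otimes A$ (and hence on $B_1$, $B_2$) is $\Delta_{H\otimes A}=\id\otimes\Delta_A$, i.e.\ $h\otimes a\mapsto h\otimes a_{(0)}\otimes a_{(1)}$; it is \emph{not} the codiagonal $h\otimes a\mapsto h_{(1)}\otimes a_{(0)}\otimes h_{(2)}a_{(1)}$. (You can see this directly: $({}_A\can\otimes\id)(a_{(0)}\utimes b_{(0)}\otimes a_{(1)}b_{(1)})=a_{(-1)}\otimes a_{(0)}b_{(0)}\otimes a_{(1)}b_{(1)}=(\id\otimes\Delta_A)(a_{(-1)}\otimes a_{(0)}b)$, with no coproduct on the $H$-leg.) Consequently your plan to build $\ell_{H\otimes A}$ from $\ell_H\otimes\ell_A$ via the multiplication $H\otimes H\to H$ is aimed at the wrong comodule structure; the resulting map will not satisfy the bicolinearity axiom~(1) for the actual coaction, because the $H$-leg carries no coaction at all.

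Once the coaction is correctly identified, the whole gluing apparatus becomes unnecessary. Since only the $A$-factor is coacted upon, one can take \emph{constant} strong connections: for $B_1$ set $\ell_1(h)=(1\otimes 1\otimes h^{\langle 1\rangle})\otimes(1\otimes 1\otimes h^{\langle 2\rangle})$, where $h^{\langle 1\rangle}\otimes h^{\langle 2\rangle}$ is the right translation map of $A$; the boundary condition at $0$ is automatic because the $H$-leg is~$1\in\bC$. For $B_2$ the boundary condition at $1$ forces the $H\otimes A$ legs to lie in ${}_A\Delta(A)$, so one simply pushes the same data through the left coaction: $\ell_2(h)=(1\otimes{}_A\Delta(h^{\langle 1\rangle}))\otimes(1\otimes{}_A\Delta(h^{\langle 2\rangle}))$. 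Bicolinearity and $\widetilde{\can}\circ\ell_i=1\otimes\id$ then reduce to the corresponding properties of the right translation map of $A$ together with the fact that ${}_A\Delta$ is a right $H$-colinear algebra map. This is exactly what the paper does, and it avoids the membership and convexity bookkeeping you anticipated as obstacles.
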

\begin{proof}
To prove the lemma, it suffices to show the existence of  strong connections on $B_1$ and~$B_2$ \cite[p.~599]{hkmz11}. 
Note first that the right translation map for a Galois object over a Hopf algebra with bijective antipode is a strong connection.
Therefore, we will use the following notation $\can_A^{-1}(1\ot h)= h^\tone  \ot h^\ttwo$ for the right translation map. 
Let 
\begin{align}
& \ell_1 : H \lra B_1\ot B_1,\quad \ell_1(h) :=(1\ot 1 \ot h^\tone) \ot (1\ot 1 \ot h^\ttwo), \\
&  \ell_2 : H \lra B_2\ot B_2,\quad 
\ell_2(h) := (1\ot h^\tone{}_{(-1)} \ot h^\tone{}_{(0)})  \ot (1\ot h^\ttwo{}_{(-1)} \ot h^\ttwo{}_{(0)})  .
\end{align}
The unitality  of both $\ell_1$ and $\ell_2$ follows immediately from the unitality of the 
right translation map. 

Furthermore,  
\begin{align} \label{l1}
(\can_A\circ \ell_1)(h)&=1\ot 1 \ot h^\tone h^\ttwo{}_{(0)}  \ot h^\ttwo{}_{(1)}=1\ot 1 \ot  1\ot h,\\[4pt]
(\can_A\circ \ell_2)(h)&=         
1\ot h^\tone{\npt}_{(-1)}\, h^\ttwo{\npt}_{(-1)} \ot h^\tone{\npt}_{(0)}\,h^\ttwo{\npt}_{(0)}  \ot  h^\ttwo{\npt}_{(1)}\\
&=(\id\ot {}_A\Delta\ot\id)(1\ot h^\tone\,h^\ttwo{\npt}_{(0)}  \ot  h^\ttwo{\npt}_{(1)})
\nonumber\\
&= 1\ot 1\ot 1\ot h\,.  \nonumber
\end{align}

Finally, we verify the bicolinearity of $\ell_1$ and $\ell_2$. For $\ell_1$ it follows immediately from the bicolinearity of the right translation map.
For the right $H$-colinearity of $\ell_2$, we use the right $H$-colinearity of the right translation map to compute
\begin{align}
(\id\ot \Delta_{B_2}) (\ell_2 (h))&=
(1\ot h^\tone{}_{(-1)} \ot h^\tone{}_{(0)})  \ot (1\ot h^\ttwo{}_{(-1)} \ot h^\ttwo{}_{(0)}) \ot h^\ttwo{}_{(1)}
\nonumber\\ &=
(\id \ot{}_A\Delta\ot\id \ot {}_A\Delta\ot\id) (1\ot h^\tone \ot 1 \ot h^\ttwo{}_{(0)}\ot h^\ttwo{}_{(1)})
\nonumber\\ &=
(\id \ot{}_A\Delta\ot\id \ot {}_A\Delta\ot\id)
 (1\ot h_{(1)}{}^\tone \ot 1 \ot h_{(1)}{}^\ttwo \ot h_{(2)})
\nonumber\\ &=
(1\ot h_{(1)}{}^\tone{}_{(-1)} \ot h_{(1)}{}^\tone{}_{(0)})  \ot (1\ot h_{(1)}{}^\ttwo{}_{(-1)} \ot h_{(1)}{}^\ttwo{}_{(0)})\ot h_{(2)}
\nonumber\\ &=
\ell_2 (h_{(1)})\ot h_{(2)} = (\ell_2 \ot \id)(\Delta (h))\,.
\end{align}
Much in the same way, for the left $H$-colinearity of $\ell_2$, we use the left $H$-colinearity of the right translation map to compute
\begin{align}
(\Delta^L_{B_2}\ot\id) (\ell_2 (h))&=
S^{-1}(h^\tone{}_{(1)}) \ot (1\ot h^\tone{}_{(-1)} \ot h^\tone{}_{(0)})  \ot (1\ot h^\ttwo{}_{(-1)} \ot h^\ttwo{}_{(0)})
\nonumber\\ &=
(\id \ot\id\ot {}_A\Delta\ot\id \ot {}_A\Delta) \big( S^{-1}(h^\tone{}_{(1)})\ot 
1\ot h^\tone{}_{(0)} \ot 1 \ot h^\ttwo\big)
\nonumber\\ &=
(\id \ot\id\ot {}_A\Delta\ot\id \ot {}_A\Delta)
 (h_{(1)}\ot 1\ot h_{(2)}{}^\tone\ot 1 \ot h_{(2)}{}^\ttwo)
\nonumber\\ &=
h_{(1)}\ot (1\ot h_{(2)}{}^\tone{}_{(-1)} \ot h_{(2)}{}^\tone{}_{(0)})  \ot (1\ot h_{(2)}{}^\ttwo{}_{(-1)} \ot h_{(2)}{}^\ttwo{}_{(0)})
\nonumber\\ &=
h_{(1)}\ot \ell_2 (h_{(2)})  = (\id\ot\ell_2 )(\Delta (h))\,.
\end{align}
Summarizing, $\ell_1$ and $\ell_2$ are strong connections, and the lemma follows.
\end{proof}

We already know that the coaction-invariant subalgebra of $A*_HA$ is isomorphic to the unreduced suspension of~$H$~\eqref{coinv}.
Now, combining the above lemma with \cite[Lemma~3.2]{hkmz11} and the right $H$-comodule algebra isomorphisms
$B_i\cong A_i$, $i\in\{1,2\}$, we arrive at the main theorem of this paper:
\begin{theorem}\label{dangerous}
Let $H$ be a Hopf algebra with bijective antipode. Assume that  $A$ is a bicomodule algebra
and a left and right Galois object over $H$. 
Then the coaction
$$
\Delta_{A \underset{H}{*}  A}\,: \,A \underset{H}{*}  A\,\longrightarrow \,(A \underset{H}{*} A)\otimes H
$$
is \emph{principal}.
Furthermore,
 the coaction-invariant subalgebra 
 $(A*_HA)^{\mathrm{co}H}$ is isomorphic to the unreduced suspension of~$H$~\eqref{coinv}.
\end{theorem}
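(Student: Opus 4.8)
The plan is to assemble Theorem \ref{dangerous} from the structural results already in place, so that essentially no new computation is needed. First I would record the pullback presentation explicitly: by the discussion following \eqref{maps}, the algebra $A*_HA$ is the fibre product of the right $H$-comodule algebras $A_1$ and $A_2$ over $A_{12}=A\utimes A$ along the surjective, right $H$-colinear maps $\pi_1,\pi_2$. Using Lemma \ref{unbraid}, the isomorphism ${}_A\can$ transports this to the fibre product of $B_1$ and $B_2$ over $H\ot A$, with the two evaluation-at-$\tfrac12$ maps, and one checks these maps are still surjective (the relevant images $\bC\ot A$ and ${}_A\Delta(A)$ are exactly the fibres at the endpoints, so restriction to $[0,\tfrac12]$ and $[\tfrac12,1]$ hits everything). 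Thus $A*_HA\cong B_1\times_{H\ot A}B_2$ as right $H$-comodule algebras.

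Next I would invoke the principality of the pieces. By Lemma \ref{Bp}, $B_1$ and $B_2$ are principal right $H$-comodule algebras, and $H\ot A$ is principal as well (it is $A\utimes A$ by Lemma \ref{unbraid}, and $A$ being a right Galois object over a Hopf algebra with bijective antipode is principal via its right translation map, which extends componentwise). Now \cite[Lemma~3.2]{hkmz11} states precisely that a pullback of principal comodule algebras along surjective colinear maps over a principal comodule algebra is again principal — equivalently, strong connections on $B_1$, $B_2$, $H\ot A$ that agree under the projections glue to a strong connection on the pullback, and since a partition-of-unity argument on $[0,1]$ lets one interpolate between any two connections, compatibility can be arranged. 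Applying this gives principality of $B_1\times_{H\ot A}B_2$, hence of $A*_HA$ through the isomorphism of the first paragraph.

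Finally, for the coaction-invariant subalgebra I would simply point to \eqref{coinv}: since ${}_A\can$ identifies $\mathbb{C}\utimes A$ with $\mathbb{C}\ot A$ and $A\utimes\mathbb{C}$ with ${}_A\Delta(A)$, and since $\Delta_A(a)=a\ot 1$ forces $a\in\mathbb{C}$ by ergodicity of the right Galois object, the invariants of $B_1$ and $B_2$ are the unreduced cones of $H$ as in \eqref{B1inv}–\eqref{B2inv}; gluing at $\tfrac12$ produces $\Sigma H=\{g\in C([0,1])\ot H\mid g(0),g(1)\in\bC\}$. Taking invariants commutes with the pullback (it is a limit), so $(A*_HA)^{\mathrm{co}H}\cong\Sigma H$.

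The main obstacle is the gluing step: one must be sure that the hypotheses of \cite[Lemma~3.2]{hkmz11} genuinely apply, i.e. that the connecting maps $\pi_1,\pi_2$ are surjective right $H$-comodule algebra homomorphisms and that $H\ot A$ is principal, and — if the cited lemma requires matching connections rather than merely constructing them abstractly — that the connections $\ell_1$ and $\ell_2$ of Lemma \ref{Bp} can be modified near $t=\tfrac12$ so that $\pi_1\circ\ell_1=\pi_2\circ\ell_2$; this is where the continuous functions on $[0,1]$ and a bump-function interpolation between two strong connections on $H\ot A$ do the real work. Everything else is bookkeeping already carried out in the preceding subsections.
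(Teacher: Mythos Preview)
Your proposal is correct and follows essentially the same route as the paper: present $A*_HA$ as the pullback of $A_1\cong B_1$ and $A_2\cong B_2$ over $A_{12}\cong H\ot A$, invoke Lemma~\ref{Bp} for the principality of the pieces, apply \cite[Lemma~3.2]{hkmz11} to the pullback, and read off the invariants from \eqref{B1inv}--\eqref{B2inv} and~\eqref{coinv}. The paper's own proof is in fact the single sentence preceding the theorem statement and does not spell out the surjectivity of $\pi_1,\pi_2$, the principality of the middle piece, or any connection-matching argument --- your additional caveats are more explicit than what the paper records, but they do not represent a different strategy.
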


\section{*-Galois objects}\label{three}
\setcounter{equation}{0}

\subsection{*-structure}

Assume now that $H$ is a *-Hopf algebra. This means that $H$ is a Hopf algebra and a *-algebra such that
\[
(*\ot*)\circ \Delta =\Delta \circ *,  \quad 
*\circ S\circ *\circ S = \id\quad \text{and}\quad  
\varepsilon\circ * = \overline{\phantom{x} }\circ\varepsilon ,
\]
 where bar denotes the complex conjugation.

Much in the same way, we call $A$ a right $H$ *-comodule algebra iff it is a *-algebra and a right $H$-comodule algebra such that
\[
(*\ot*)\circ \Delta_A =\Delta_A \circ *\,.
\]
A left *-comodule algebra is defined in the same manner. 

Next,  we use the algebra isomorphism ${}_A\can : A\ot A\ra H\ot A$ (see Lemma~\ref{unbraid}) to pullback the natural
*-structure on $H\ot A$ (given by $(h\ot a)^* = h^* \ot a^*$) to obtain the following *-structure on the braided algebra $A\utimes A$:
\begin{align} \label{star}
(a\utimes b)^*:\! &= ({}_A\can^{-1} \circ (*\ot *) \circ {}_A\can)(a\utimes b)
\nonumber\\ &=
{a^*}_{(-1)}{}^{[1]} \utimes {a^*}_{(-1)}{}^{[2]} \, b^* \,{a^*}_{(0)}=(1\utimes b^*)\sbullet(a^*\otimes 1). 
\end{align}
Our goal now is to show:
\begin{proposition}
 If $A$ is an $H$ *-bicomodule algebra and
a left $H$-Galois object, then the $H$-braided join algebra $A*_HA$ is a right $H$ *-comodule algebra for the diagonal coaction.
\end{proposition}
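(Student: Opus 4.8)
The goal is to prove that the $*$-structure on $A\utimes A$ coming from \eqref{star} extends to $A*_HA$, is compatible with the multiplication, and is intertwined with the diagonal coaction in the sense of $(*\ot*)\circ\Delta_{A*_HA} = \Delta_{A*_HA}\circ *$. The natural first move is to invoke Lemma~\ref{unbraid}: the left canonical map ${}_A\can\colon A\utimes A\to H\ot A$ is an isomorphism of right $H$-comodule algebras. Since $H\ot A$ carries the obvious tensor-product $*$-structure $(h\ot a)^*=h^*\ot a^*$, which is an antimultiplicative involution and which satisfies $(*\ot*)\circ\Delta_{H\ot A}=\Delta_{H\ot A}\circ *$ (because $H$ is a $*$-Hopf algebra and $A$ is a right $*$-comodule algebra), the pullback along ${}_A\can$ automatically gives $A\utimes A$ the structure of a right $H$ $*$-comodule algebra. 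In particular, formula \eqref{star} does define an antilinear antimultiplicative involution, and $(*\ot*)\circ\Delta_{A\utimes A}=\Delta_{A\utimes A}\circ *$ holds on $A_{12}=A\utimes A$. This is the conceptual core; everything else is bookkeeping about the join's pullback description.

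\medskip
Next I would promote this to $C([0,1])\ot A\utimes A$ by tensoring with the standard $*$-structure $f\mapsto\bar f$ on $C([0,1])$ (complex conjugation, which is compatible with evaluation at any point). So $C([0,1])\ot A\utimes A$ is a right $H$ $*$-comodule algebra with coaction $\id\ot\Delta_{A\utimes A}$, and I must check that the $*$-operation preserves the subalgebra $A*_HA$ cut out by the two endpoint conditions $(\ev_0\ot\id)(x)\in\mathbb{C}\ot A$ and $(\ev_1\ot\id)(x)\in A\ot\mathbb{C}$. Here is where the structure of \eqref{star} matters: I would use that ${}_A\can(\mathbb{C}\utimes A)=\mathbb{C}\ot A$ and ${}_A\can(A\utimes\mathbb{C})={}_A\Delta(A)$ (recorded just after \eqref{maps}). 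Since the pulled-back $*$ on $A\utimes A$ is, by construction, ${}_A\can^{-1}\circ(*\ot*)\circ{}_A\can$, and since $(*\ot*)$ visibly preserves $\mathbb{C}\ot A$, it follows that $*$ preserves $\mathbb{C}\utimes A$; hence the condition at $0$ is stable. For the condition at $1$ I need $*$ to preserve $A\utimes\mathbb{C}$, equivalently $(*\ot*)$ to preserve ${}_A\Delta(A)\subseteq H\ot A$ — and this is exactly the statement that $A$ is a left $H$ $*$-comodule algebra, i.e. $(*\ot*)\circ{}_A\Delta={}_A\Delta\circ *$, which holds by hypothesis (\textit{$*$-bicomodule algebra}). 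Since $\ev_0,\ev_1$ commute with $*$ on $C([0,1])$, this shows $x^*\in A*_HA$ whenever $x\in A*_HA$.

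\medskip
Having established that $*$ restricts to $A*_HA$, the remaining assertions are inherited for free: $*$ is an antilinear antimultiplicative involution on $A*_HA$ because it is one on the ambient algebra $C([0,1])\ot A\utimes A$; and the $*$-comodule compatibility $(*\ot*)\circ\Delta_{A*_HA}=\Delta_{A*_HA}\circ *$ is just the restriction of the same identity on $C([0,1])\ot A\utimes A$, which in turn reduces to $(*\ot*)\circ\Delta_{A\utimes A}=\Delta_{A\utimes A}\circ *$ from step one. So the proof is: (i) pull back the $*$-structure via the isomorphism of Lemma~\ref{unbraid} and observe it makes $A\utimes A$ a right $H$ $*$-comodule algebra; (ii) tensor with $C([0,1])$; (iii) check the two endpoint subspaces are $*$-stable, using ${}_A\can(\mathbb{C}\utimes A)=\mathbb{C}\ot A$ for the point $0$ and the left $*$-comodule condition ${}_A\can(A\utimes\mathbb{C})={}_A\Delta(A)$ together with $(*\ot*)\circ{}_A\Delta={}_A\Delta\circ *$ for the point $1$.

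\medskip
\textbf{Main obstacle.} There is no serious obstacle; the only point demanding a moment's care is step (iii), specifically verifying that the pulled-back involution \eqref{star} genuinely preserves the endpoint-$1$ condition. One should resist the temptation to manipulate the explicit Sweedler expression ${a^*}_{(-1)}{}^{[1]}\utimes{a^*}_{(-1)}{}^{[2]}\,b^*\,{a^*}_{(0)}$ directly; the clean argument is the abstract one above — $(*\ot*)$ preserves $\mathbb{C}\ot A$ and preserves ${}_A\Delta(A)$ — transported through ${}_A\can$. If a hands-on check of \eqref{star} on elements of the form $1\utimes a$ and $a\utimes 1$ is preferred for the write-up, it is a short computation using \eqref{braida}–\eqref{braidaa} and the formula for $\P$, but it is strictly speaking redundant given Lemma~\ref{unbraid}.
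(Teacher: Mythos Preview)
Your proposal is correct and follows essentially the same route as the paper: pull back the $*$-structure on $H\ot A$ through ${}_A\can$ (this is precisely \eqref{star}), tensor with $C([0,1])$, check the endpoint conditions are $*$-stable so that $*$ restricts to $A*_HA$, and use Lemma~\ref{unbraid} to see that $\Delta_{A\utimes A}$ --- and hence its restriction $\Delta_{A*_HA}$ --- is a $*$-homomorphism. The only cosmetic difference is in the verification of $*$-stability at the endpoints: the paper reads $(\mathbb{C}\utimes A)^*=\mathbb{C}\utimes A$ and $(A\utimes\mathbb{C})^*=A\utimes\mathbb{C}$ directly off the explicit formula \eqref{star} (the latter via \eqref{c}), whereas you transport the question through ${}_A\can$ and invoke the left $*$-comodule axiom to see that ${}_A\Delta(A)$ is $(*\ot*)$-stable; both arguments are equally short and yield the same conclusion.
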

\begin{proof}
With the complex conjugation in the first component and the aforementioned *-structure on
$A\utimes A$,  the algebra $C([0,1])\otimes A\underline\otimes A$ becomes 
a *-algebra. On the other hand, it follows from \eqref{star} that $(\bC\utimes A)^* = \bC\utimes A$ and 
$(A\utimes \bC)^* = A\utimes \bC$. Therefore, as evaluation maps are \mbox{*-homomorphisms}, the *-structure on
$C([0,1])\otimes A\underline\otimes A$  restricts to a *-structure on $A*_HA$. 

Furthermore, we know from Lemma~\ref{unbraid} that $\Delta_{A\utimes A}={}_A\can^{-1}\circ(\id\ot\Delta_A)\circ{}_A\can$. Since
all the involved maps are *-homomorphisms, so is~$\Delta_{A\utimes A}$.   Finally, since $\Delta_{A*_HA}$ is a restriction of
 $\id\ot\Delta_{A\utimes A}$, and $\Delta_{A\utimes A}$ is a *-homomorphism,
it follows that $\Delta_{A*_HA}$ is a *-homomorphism. 
\end{proof}

\begin{remark}{\rm
Although it is not needed for our immediate purposes, for the sake of completeness, let us prove the left-sided version
of Durdevic's formula relating the *-structure with the left translation map \cite[Section~2]{d-m96}. Let $H$ be a *-Hopf algebra, 
and $Q$ a left $H$ \mbox{*-comodule} algebra such that the left canonical map \eqref{can} is bijective. 
Then the left translation map (see~\eqref{ltrans})
satisfies
\[
\forall\; h\in H:\;\tau(h^*)=(h^*)^{[1]} \ot (h^*)^{[2]} = (S^{-1}(h))^{[2]\,*} \ot  (S^{-1}(h))^{[1]\,*} .
\]
To prove this, it suffices to show that ${}_Q\can$ applied to the right hand side gives $h^*\ot 1$. 
Using \eqref{lacl} in the second equality, we get 
\begin{align} 
&\big((S^{-1}(h))^{[2]\,*}\big){}_{(-1)} \ot \big((S^{-1}(h))^{[2]\,*}\big){}_{(0)} (S^{-1}(h))^{[1]\,*} \nonumber \\
&= \big((S^{-1}(h))^{[2]}{}_{(-1)}\big)^* \ot  \big((S^{-1}(h))^{[2]}{}_{(0)}\big)^* (S^{-1}(h))^{[1]\,*}\nonumber \\
&= \big(S (S^{-1}(h_{(1)}))\big)^*  \ot (S^{-1}(h_{(2)}))^{[2]\,*}\, (S^{-1}(h_{(2)}))^{[1]\,*} \nonumber\\
&= h_{(1)}^*\ot \big((S^{-1}(h_{(2)}))^{[1]} \, (S^{-1}(h_{(2)}))^{[2]}\big)^*\nonumber \\
&= h_{(1)}^*\ot \,\overline{\vare(h_{2})}\nonumber\\
&= h^* \ot 1. 
\end{align}
}
\end{remark}

\subsection{Noncommutative-torus algebra as a Galois object}

In this subsection, we take the algebra $\mathcal{O}(\T^2)$ of Laurent polynomials in two variables  as our *-Hopf algebra~$H$.
It is generated by
commuting unitaries $u$ and $v$, and the Hopf algebra structure is defined by
\[
\Delta(u) =u \ot u,\ \  \Delta(v)= v\ot v, \ \ \vare(u)=1=\vare(v),\ \ S(u)=u^*, \ \ S(v)=v^*. 
\]
 Next, let $\theta\in[0,1)$  and let $A:=\mathcal{O}(\T^2_\theta)$ denote the polynomial *-algebra of 
the noncommutative torus, i.e.\ the *-algebra
generated by unitary elements $U$ and $V$ satisfying the relation 
\[\label{theta}
UV=\mathrm{e}^{2\pi\mathrm{i}\theta} VU. 
\]
We define coactions $\Delta_\NT: \NT \ra \NT\ot \HB$  and ${}_\NT\Delta: \NT \ra \HB\ot \NT$   by  
\[
\Delta_\NT(U) := U\ot u,\ \  \Delta_\NT(V) := V\ot v, \ \ {}_\NT\Delta(U) := u\ot U,\ \  {}_\NT\Delta(V) := v\ot V. 
\]
These  coactions turn $\NT$ into an $H$ *-bicomodule algebra. 
Since
$\{ U^k V^l\,| \, k,l \in \bZ\}$ is a linear basis of $\NT$ (by the Diamond Lemma \cite[Theorem~1.2]{b-g78}), one sees immediately 
that \mbox{${}^{\mathrm{co}\HB}\!\NT=\bC=\NT^{\mathrm{co}\HB}$}. Furthermore, it is straightforward to check that 
the inverses of the left and right canonical maps  are respectively given by 
\[
{}_A\can^{-1}(u^kv^l\ot a) =  U^kV^l   \ot V^{-l}U^{-k} a, \quad
\can_A^{-1}(a\ot u^kv^l) = a V^{-l}U^{-k}\ot U^kV^l   . 
\]
Hence $\NT$ is a left and right Galois object over~$H$. As the antipode of $H$ is bijective, $A$ satisfies all assumptions
of Theorem~\ref{dangerous}.

Using \eqref{theta}, one easily verifies that
the braiding \eqref{ldb} reads 
\[
\NT\ot\NT\ni  U^kV^l\ot U^m V^n \longmapsto       
  \mathrm{e}^{2\pi\mathrm{i}\theta(kn-lm)}   U^m V^n \ot  U^kV^l  \in \NT\ot\NT\,.
\]
Now the product \eqref{braided_product} in 
$\NT\utimes\NT$ is determined by 
\begin{align}
(U^rV^s\utimes U^k V^l )\sbullet (U^mV^n\utimes U^a V^b) &= 
 \mathrm{e}^{2\pi\mathrm{i}\theta(kn-lm)} \, U^rV^s U^mV^n \utimes   U^k V^l U^a V^b \nonumber\\
 &=  \mathrm{e}^{2\pi\mathrm{i}\theta(kn-lm-sm-la)} \, U^{r+m}V^{s+n}  \utimes   U^{k+a} V^{l+b}, 
\end{align} 
where $r,s,k,l,m,n,a,b\in\bZ$.
One readily checks that the elements
\[ \label{UVUV}
U_L:= U\utimes 1,\quad V_L := V\utimes 1, \quad U_R:= 1\utimes U,\quad V_R := 1\utimes V,
\]
satisfy the relations 
\begin{align}
U_RU_L=U_LU_R,\quad V_RV_L=V_LV_R,&\quad
U_LV_L= \mathrm{e}^{2\pi\mathrm{i}\theta} V_LU_L, \quad
U_RV_R= \mathrm{e}^{2\pi\mathrm{i}\theta} V_RU_R, \label{qt1}\\
 U_R V_L =  \mathrm{e}^{2\pi\mathrm{i}\theta}  V_L U_R, &\quad
V_R U_L =  \mathrm{e}^{-2\pi\mathrm{i}\theta} U_L V_R\,.\label{qt2}
\end{align} 
It follows from \eqref{star} that $U_L$, $V_L$, $U_R$, $V_R$ are unitary. Furthermore, since they generate $A\utimes A$,
any element $y\in C([0,1])\otimes \NT \utimes \NT$ can be written as 
\[ \label{y}
y= \sum_{\text{finite}} f_{klmn}\otimes U_L^kV_L^l U_R^m V_R^n , \qquad f_{klmn}\in C([0,1]). 
\]
From $U_L^kV_L^l U_R^m V_R^n = U^kV^l \utimes U^m V^n$, we conclude that 
\begin{align} \label{AHA}
\NT\underset{\HB}{*}\NT=&\left\{\sum_{\text{finite}}  f_{klmn}\otimes U_L^kV_L^l U_R^m V_R^n
 \in C([0,1])\ot \NT\utimes\NT \;\Big|\; \right.\nonumber\\    
 &\phantom{xx} k,l,m,n \in\bZ, \;
 f_{klmn}(0)=0 \text{ for } (k,l)\neq (0,0), \;
  f_{klmn}(1)=0 \text{ for } (m,n)\neq (0,0)\Big\}. 
\end{align}

Finally, the diagonal coaction $\Delta_{A*_HA} : \NT*_H\NT\ra (\NT*_H\NT)\ot \HB$ is determined by 
\[
\Delta_{\NT\underset{H}{*}\NT}(f\ot U_L^kV_L^l U_R^m V_R^n) 
= f\ot U_L^kV_L^l U_R^m V_R^n\ot u^{k+m} v^{l+n}. 
\]
By Theorem~\ref{dangerous}, the above coaction is principal (admits a strong connection), and the coaction-invariant subalgebra
$(\NT*_\HB\NT)^{\mathrm{co}\HB}$ can be viewed 
as an algebra of functions on the unreduced suspension of the classical torus. 
Explicitly, we have
$$
(\NT\underset{\HB}{*}\NT)^{\mathrm{co}\HB} 
=\left\{\sum_{\text{finite}}  g_{kl}\otimes X^k Y^l
 \in \NT\underset{\HB}{*}\NT    \;\Big|\;      
   g_{kl}(0)=0 =g_{kl}(1)\text{ for } (k,l)\neq (0,0), \ k,l \in\bZ \right\},
$$
where 
$X:= U_LU_R^{*} = U\utimes U^*$ and  $Y:= V_L V_R^{*}=V\utimes V^{*}$
are commuting unitaries.

To end with, let us note that, as the Hopf algebra $H$ is commutative, the diagonal coaction
$A\ot A\to A\ot A\ot H$ is an algebra homomorphism already for the trivial braiding (the flip). However, 
for the non-braided tensor algebra~$A\ot A$, the left canonical map ${}_A\can$ is no longer an algebra homomorphism:
\begin{align}
{}_A\can \big((1\ot U)(V\ot 1)\big)&={}_A\can (V\ot U)=v\ot VU\nonumber\\
&\neq v\ot UV=(1\ot U)(v\ot V)={}_A\can(1\ot U){}_A\can(V\ot 1).
\end{align}
The braided algebra $A\utimes A$ is ``more noncommutative" than $A\ot A$ in the sense that
the relations \eqref{qt1} among generators are the same in both cases, but the relations \eqref{qt2} simplify
to the commutativity of generators for~$A\ot A$.

\section{Finite quantum coverings}\label{four}
\setcounter{equation}{0}

In this section, first we show that for any finite-dimensional Hopf algebra $H$, the anti-Drinfeld double $A(H)$ is a bicomodule algebra and
 a left and right Galois object over the Drinfeld double Hopf algebra~$D(H)$. Then we apply our braided noncommutative
join construction to the aforementioned Galois object for a concrete 9-dimensional Hopf algebra~$H$.

\subsection{(Anti-)Drinfeld doubles}

Recall that for any finite-dimensional Hopf algebra $H$, one can define the Drinfeld double Hopf algebra $D(H):=H^*\ot H$ by
the following formulas for multiplication and comultiplication~\cite{d-vg87}:
\begin{align}
(\varphi \otimes h)(\varphi' \otimes h') &=
\varphi'{}_{(1)}(S^{-1}(h_{(3)})) \varphi'{}_{(3)}(h^{(1)}) \; 
\varphi \varphi'{}_{(2)} \otimes h_{(2)} h'\,,\\ \label{d-comult}
\Delta(\varphi \otimes h)&=\varphi_{(2)} \otimes h_{(1)}\ot\varphi_{(1)} \otimes h_{(2)}\,.
\end{align}
Here $H^*$ is the dual Hopf algebra, and the Heyneman-Sweedler indices refer to the coalgebra structures on $H^*$ and~$H$.
Therefore, as a coalgebra, $D(H)=(H^*)^{\mathrm{cop}}\ot H$. 

Much in the same way, one can define the anti-Drinfeld double right $D(H)$-comodule algebra $A(H):=H^*\ot H$  by
the following formulas for multiplication and coaction respectively~\cite{hkrs04a}:
\begin{align}\label{a-mult}
(\varphi \otimes h)(\varphi' \otimes h') &=
\varphi'{}_{(1)}(S^{-1}(h_{(3)})) \varphi'{}_{(3)}(S^2(h_{(1)})) \; 
\varphi \varphi'{}_{(2)} \otimes h_{(2)} h'\,,\\
\label{rco}
\Delta_{A(H)}(\varphi \otimes h)&=\varphi_{(2)} \otimes h_{(1)}\ot\varphi_{(1)} \otimes h_{(2)}\,.
\end{align}
Note that, since the formula for the right coaction is the same as the formula for the comultiplication and
as a vector space $A(H)=D(H)$, we immediately conclude that $A(H)$ is a right $D(H)$-Galois object.
This reflects the combination of the following facts:
 any Yetter-Drinfeld module over $H$ is a module over the Drinfeld double $D(H)$, 
any anti-Yetter-Drinfeld module over $H$ is a module over the anti-Drinfeld double $A(H)$,
and the tensor product of an anti-Yetter-Drinfeld module with a Yetter-Drinfeld module is an anti-Yetter-Drinfeld module
(see \cite{hkrs04a} for details).

Next, let us observe that the formula
\[\label{leftco}
{}_{A(H)}\Delta(\psi \otimes k)=\psi_{(2)} \otimes S^2(k_{(1)})\ot\psi_{(1)} \otimes k_{(2)}\,
\]
defines a left $D(H)$-coaction on $A(H)$, which commutes with the above defined right coaction~$\Delta_{A(H)}$. Also,
since the comultiplication formula \eqref{d-comult} differs from the left coaction  formula \eqref{leftco} only by an automorphism
$S^2$, the coaction invariant subalgebra is trivial: \mbox{${}^{\mathrm{co} D(H)}A(H)=\mathbb{C}$}. Thus
to arrive at the assumptions of our main result (Theorem~\ref{dangerous}), it suffices to show that ${}_{A(H)}\Delta$ is an
algebra homomorphism. (The antipode of any finite-dimensional Hopf algebra is bijective~\cite{ls69}.)

To this end, note first that $\varphi$ and $h'$ do not play an essential role in the multiplication formula~\eqref{a-mult}. One can easily
check that to prove that ${}_{A(H)}\Delta$ is an
algebra homomorphism, one can restrict to $\varphi=\varepsilon$ and $h'=1$. Now we compute
\begin{align}
{}_{A(H)}\Delta\big((\vare \otimes h)(\varphi' \otimes 1)\big) &=
{}_{A(H)}\Delta\big(\varphi'{}_{(1)}(S^{-1}(h_{(3)})) \varphi'{}_{(3)}(S^2(h_{(1)})) \; 
 \varphi'{}_{(2)} \otimes h_{(2)}\big) \nonumber\\ 
&=\big(\varphi'{}_{(1)}(S^{-1}(h_{(4)})) \varphi'{}_{(4)}(S^2(h_{(1)})) \; 
\varphi'{}_{(3)} \otimes  S^2(h_{(2)}) \big)
\otimes \big(\varphi'{}_{(2)} \otimes h_{(3)}\big)\,.
\end{align}
On the other hand, we compute
\begin{align*}
&{}_{A(H)}\Delta(\vare \otimes h) {}_{A(H)}\Delta(\varphi' \otimes 1) \\
&=
\big((\vare \otimes S^2(h_{(1)}))(\varphi'{}_{(2)} \otimes 1)\big) \otimes
\big( ( \vare \otimes h_{(2)}) (\varphi'{}_{(1)} \otimes 1)\big)\\
&=
\big(\varphi'{}_{(2)}(S(h_{(3)})) \varphi'{}_{(4)}(S^2(h_{(1)})) \; 
\varphi'{}_{(3)} \otimes  S^2(h_{(2)}) \big) \otimes
\big( ( \vare \otimes h_{(4)}) (\varphi'{}_{(1)} \otimes 1)\big)
 \nonumber \\
&=
\big(\varphi'{}_{(4)}(S(h_{(3)})) \varphi'{}_{(6)}(S^2(h_{(1)})) \; 
\varphi'{}_{(5)} \otimes  S^2(h_{(2)}) \big) 
\otimes
\big(\varphi'{}_{(1)}(S^{-1}(h_{(6)})) \varphi'{}_{(3)}(S^2(h_{(4)})) \; 
\varphi'{}_{(2)} \otimes h_{(5)} \big) 
 \nonumber \\
&=
\varphi'{}_{(1)}(S^{-1}(h_{(6)}))\varphi'{}_{(3)}(S^2(h_{(4)}))\varphi'{}_{(4)}(S(h_{(3)}))\varphi'{}_{(6)}(S^2(h_{(1)}))
\big(\varphi'{}_{(5)} \otimes  S^2(h_{(2)}) \big) \ot \big(\varphi'{}_{(2)} \otimes h_{(5)}
\big) \\
&=
\varphi'{}_{(1)}(S^{-1}(h_{(6)}))\varphi'{}_{(3)}\big(S\big(h_{(3)}S(h_{(4)})\big)\big)\varphi'{}_{(5)}(S^2(h_{(1)}))
\big(\varphi'{}_{(4)} \otimes  S^2(h_{(2)}) \big) \ot \big(\varphi'{}_{(2)} \otimes h_{(5)}
\big) \\
&=
\varphi'{}_{(1)}(S^{-1}(h_{(4)}))\varphi'{}_{(3)}(1)\varphi'{}_{(5)}(S^2(h_{(1)}))
\big(\varphi'{}_{(4)} \otimes  S^2(h_{(2)}) \big) \ot \big(\varphi'{}_{(2)} \otimes h_{(3)}
\big) \\
&=
\varphi'{}_{(1)}(S^{-1}(h_{(4)})) \varphi'{}_{(4)}(S^2(h_{(1)})) \; 
\big(\varphi'{}_{(3)} \otimes  S^2(h_{(2)}) \big)
\otimes \big(\varphi'{}_{(2)} \otimes h_{(3)}\big)\,.
\end{align*}
Hence ${}_{A(H)}\Delta$ is an algebra homomorphism, as needed. Summarizing, we have arrived at:
\begin{theorem}
Let $H$ be a finite-dimensional Hopf algebra. Then the anti-Drinfeld double $A(H)$ is a bicomodule algebra
and a left and right Galois object over the Drinfeld double $D(H)$ for coactions given by the formulas
\eqref{leftco} and~\eqref{rco}.
\end{theorem}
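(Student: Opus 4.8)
The statement to be proved is Theorem 4.1: for a finite-dimensional Hopf algebra $H$, the anti-Drinfeld double $A(H)$ is a bicomodule algebra and a left and right Galois object over $D(H)$, with right coaction \eqref{rco} and left coaction \eqref{leftco}. The plan is to dispatch the three ingredients separately: (i) $A(H)$ is a right $D(H)$-Galois object; (ii) $A(H)$ carries a commuting left $D(H)$-coaction that is also an algebra homomorphism; (iii) this left coaction is Galois. The bicomodule-algebra property is precisely (i) plus (ii), together with the observation that the two coactions commute at the vector-space level because they are both copies of the coproduct formula applied in different tensor slots.

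For (i), I would observe — as the text already does — that the right coaction formula \eqref{rco} is \emph{identical} to the comultiplication formula \eqref{d-comult} of $D(H)$ and that $A(H)=D(H)=H^*\ot H$ as vector spaces; hence the right canonical map $\can_{A(H)}$ coincides, up to identifications, with the canonical map of $D(H)$ regarded as a Galois object over itself via its coproduct, which is bijective because any Hopf algebra is a Galois object over itself (the inverse of the canonical map being $x\ot h\mapsto xS(h_{(1)})\ot h_{(2)}$). The coaction-invariance computation $A(H)^{\mathrm{co}D(H)}=\mathbb{C}$ follows since the coinvariants of a Hopf algebra under its coproduct are the scalars, and the same argument (twisting by the bijective map $S^2$) gives $^{\mathrm{co}D(H)}A(H)=\mathbb{C}$ for the left coaction \eqref{leftco}.

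For (ii), the substantive point is that $_{A(H)}\Delta$ is an algebra homomorphism. Here I would reduce the verification: since in the multiplication formula \eqref{a-mult} the left factor $\varphi$ and the right factor $h'$ merely get multiplied/concatenated passively, it suffices to check multiplicativity on elements of the form $(\vare\ot h)$ and $(\varphi'\ot 1)$, i.e.\ to show $_{A(H)}\Delta\big((\vare\ot h)(\varphi'\ot1)\big)=\,_{A(H)}\Delta(\vare\ot h)\,_{A(H)}\Delta(\varphi'\ot1)$. Then it is a Heyneman-Sweedler bookkeeping computation: expand both sides using \eqref{a-mult}, \eqref{leftco} and \eqref{d-comult}, and collapse the right-hand side using the antipode axioms $S(h_{(3)})S(h_{(4)})$-type cancellations together with the identity $\varphi'_{(3)}(S(h_{(3)}S(h_{(4)})))=\varphi'_{(3)}(1)=\vare(\varphi'_{(3)})$, which removes one pair of legs and reindexes the sum to match the left-hand side. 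Commutativity of the two coactions, $(_{A(H)}\Delta\ot\id)\circ\Delta_{A(H)}=(\id\ot\Delta_{A(H)})\circ\,_{A(H)}\Delta$, is immediate from coassociativity of the $D(H)$-coproduct once one notes both sides act on disjoint tensor slots and $S^2$ is a coalgebra map.

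For (iii), the left canonical map $_{A(H)}\can$ differs from the right one only by the coalgebra automorphism $S^2$ applied in the appropriate slot (compare \eqref{leftco} with \eqref{rco}), so its bijectivity follows from that of the right canonical map by composing with $(S^2\ot\id)$ and $(\id\ot S^{-2})$; concretely the inverse is $h\ot(\psi\ot k)\mapsto h\, S^{-1}(\text{legs})\ot\cdots$ transported through $S^2$. Since the antipode of a finite-dimensional Hopf algebra is bijective \cite{ls69}, the bijective-antipode hypothesis needed downstream (e.g.\ in Theorem~\ref{dangerous}) is automatic for $D(H)$, whose antipode is built from those of $H$ and $H^*$. \textbf{The main obstacle} I anticipate is purely combinatorial: organizing the roughly six-fold Heyneman-Sweedler expansion in step (ii) so that the antipode cancellations are visible and the final reindexing is unambiguous; no conceptual difficulty is expected beyond keeping the coproducts of $\varphi'$ and of $h$ correctly threaded through the two multiplications and the two coactions.
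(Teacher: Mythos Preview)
Your proposal is correct and follows essentially the same route as the paper: the paper also notes that the right Galois property is immediate because \eqref{rco} coincides with \eqref{d-comult} on the same underlying vector space, that the left coaction differs from the coproduct only by the automorphism $S^2$ (yielding trivial coinvariants and, implicitly, bijectivity of the left canonical map), and that the only substantive work is verifying that ${}_{A(H)}\Delta$ is multiplicative. The paper makes exactly the reduction you describe to $\varphi=\vare$, $h'=1$, then expands both sides in Heyneman--Sweedler notation and collapses the right-hand side via the antipode identity $\varphi'{}_{(3)}\big(S(h_{(3)}S(h_{(4)}))\big)=\varphi'{}_{(3)}(1)$, precisely the cancellation you anticipated.
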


\subsection{A finite quantum subgroup of $SL_{e^{2\pi i/3}}(2)$}

Let $q:=e^{2\pi i/3}$, and let  $\HA$ denote the Hopf algebra generated by $a$ and $b$ 
satisfying the relations  
\[ \label{HArel}
a^3=1,\qquad  b^3=0, \qquad ab = qba. 
\]
The comultiplication $\Delta$, counit $\vare$, and antipode $S$  are respectively given by 
\[
\Delta(a)= a\otimes a, \ \ \Delta(b) = a\otimes b + b\otimes a^2 , \ \  \vare(a)=1,\ \  \vare(b)=0 \ \ 
S(a)=a^2,\ \ S(b)=-q^2 b. 
\]
The set $\{b^na^m\}_{n,m=0,1,2}$ is a linear basis of $\HA$ \cite[Proposition~4.2]{dhs99}. 

The structure of the dual Hopf algebra $\HA^*$ and its pairing with $H$ can be deduced from~\cite{dns98}.  
We use  generators $k$ and $f$ of $\HA^*$ that in terms of generators used in \cite{dns98} can be written as follows:
 $k$ is the equivalence class of the grouplike generator of $U_q(sl(2))$ and $f:= q^2kx_-$, where $x_-$ 
is the equivalence class of  $X_-\in U_q(sl(2))$. Our generators satisfy the relations 
\[\label{HA*rel}
k^3 =1,\quad f^3=0,\quad fk=qkf. 
\]
The coproduct, counit  and antipode are respectively given by 
\[
\Delta(k)=k\ot k,\ \ \Delta(f)= f\ot 1 + k\ot f,\ \  \vare(k)=1,\  \ \vare(f)=0,\ \ S(k)=k^2, \ \ S(f)=-k^2f. 
\]
The formulas 
\[
k(a) := q,\quad k(b):=0,\quad f(a):=0,\quad  f(b):=1    	
\]
determine a non-degenerate pairing between $H^*$ and $H$. 
 
The Drinfeld double $D(\HA)$, as an algebra, is generated by 
\[
K:= k\ot 1, \quad F:= f\ot 1,\quad A:= 1\ot a,\quad B:= 1\ot b, 
\]
where $K$ and $F$ satisfy the same relations \eqref{HA*rel} as $k$ and $f$, and $A$ and $B$ satisfy 
the same relations \eqref{HArel} as  $a$ and $b$. They also fulfill the cross relations
\begin{align}
AK=KA,\quad AF=q^2FA,\quad BK=q^2 KB, \quad
BF= q FB + qKA^2 - qA. 
\end{align}
The coproduct, counit  and antipode are respectively determined by 
\begin{align}
\Delta(A)=A\ot A, \quad \Delta(B)= A\ot B + B\ot A^2, &\quad \Delta(K)= K\ot K,\quad\Delta(F)= 1 \ot F + F\ot K, 
\nonumber\\
\vare(A)=1=\vare(K),&\quad \vare(B)=\vare(F)=0,\nonumber\\
S(A)=A^2,\quad S(B)=-q^2B,&\quad S(K)=K^2,\quad S(F)= -FK^2. 
\end{align} 

For the anti-Drinfeld double $A(H)$ we define analogous generators:
\[
\tK:= k\ot 1, \quad  \tF:= f\ot 1,\quad  \tA:= 1\ot a,\quad  \tB:= 1\ot b. 
\]
It follows from  \eqref{a-mult} that  $\tK$ and $\tF$ satisfy the same relations as 
 $k$ and $f$, and $\tA$ and $\tB$  fulfill the same relations as $a$ and $b$. However, 
the cross relations now become
\begin{align}
\tA\tK=\tK\tA,\quad \tA\tF=q^2\tF\tA,\quad \tB\tK=q^2 \tK\tB, \quad
\tB\tF= q \tF\tB + q^2\tK\tA^2 - q\tA. 
\end{align}
The left and right  $D(H)$-coactions \eqref{leftco} and \eqref{rco} in terms of generators are
\begin{align}
{}_{D(\HA)}\Delta(\tA)&=  A\ot\tA, &\Delta_{D(\HA)} (\tA)&=\tA\ot A, \\ 
{}_{D(\HA)}\Delta(\tB)&=  A\ot\tB + q B \ot \tA^2, & \Delta_{D(\HA)}(\tB)&= \tA\ot B + \tB\ot A^2, \\
{}_{D(\HA)}\Delta(\tK)&=  K\ot\tK,  & \Delta_{D(\HA)}(\tK)&= \tK\ot K, \\
{}_{D(\HA)}\Delta(\tF)&=  1\ot\tF + F \ot \tK, &  \Delta_{D(\HA)}(\tF)&= 1 \ot F + \tF\ot K.  
\end{align}
%

Furthermore, there is an algebra isomorphism $\chi: A(\HA) \ra D(\HA)$ given by 
\[
\chi(\tA)=A,\quad \chi(\tB)=qB,\quad  \chi(\tK)=q^2 K,\quad \chi(\tF)= q^2 F.
\]
A direct calculation shows that ${}_{A(\HA)}\Delta= (\id \ot \chi^{-1})\circ \Delta\circ \chi$.  
Hence
\[ \label{cLinv}
{}_{A(H)}\can^{-1} (a\ot p):= \chi^{-1}(a_{(1)}) \ot \chi^{-1}(S(a_{(2)}))\,p.
\]
Indeed, applying the bijection ${}_{A(H)}\can$ to the right hand side of this equality yields
\[
a_{(1)}\ot\chi^{-1}(a_{(2)}) \chi^{-1}(S(a_{(3)}))\,p=a\ot p,
\]
as needed.

Our next step is to unravel the structure of the left Hopf-Galois braided algebra $A(H)\utimes A(H)$.
To this end, we choose its generators as follows:
\begin{align}\label{obvious}
A_L := \tA \utimes 1, \quad B_L := \tB \utimes 1,\quad K_L :=\tK \utimes 1,\quad F_L := \tF\utimes 1,\nonumber\\
A_R :=  1\utimes \tA , \quad B_R := 1\utimes \tB,\quad K_R := 1 \utimes \tK,\quad F_R :=  1 \utimes \tF. 
\end{align} 
Each of the sets of generators $\{ A_L,B_L,K_L,F_L\}$ and $\{ A_R,B_R,K_R,F_R\}$ 
satisfies  the commutation relations of $A(H)$, and from \eqref{braided_product} and \eqref{cLinv} we 
infer the cross relations: 
\begin{align}
&A_R A_L = A_L A_R,\qquad  B_R A_L = q^2 A_L B_R, \quad &&K_R A_L = A_L K_R,\qquad F_R A_L = q A_L F_R, \\
&A_R B_L = B_L A_R +(1-q^2)A_LB_R,\quad  &&B_RB_L = qB_LB_R+ (1-q)A_LA_RB_R^2,\\
&K_RB_L= B_L K_R +(q-1)A_LA_R^2B_RK_R,\quad &&F_R B_L = q^2B_L F_R  - q A_LA_RK_R  + A_L,\\
& A_R K_L = K_L A_R,\qquad B_R K_L = q^2 K_L ,\quad &&K_R K_L = K_L K_R,\qquad F_RK_L = q K_L F_R , \\ 
& A_R F_L = F_L A_R + (1-q) A_R F_R,\quad &&B_R F_L = q^2 F_L B_R - q A_R^2K_R + A_R, \\
& K_RF_L= F_L K_R +(1-q) K_R F_R, \quad &&F_R F_L = q F_L F_R + (1-q) F_R^2. 
\end{align}
Furthermore, since $A(H)\utimes A(H)\cong H^*\ot H\ot H^*\ot H$ as a vector space, the set
\[
\{ A_L^{n_1} B_L^{n_2} K_L^{n_3} F_L^{n_4} A_R^{n_5} B_R^{n_6} K_R^{n_7} F_R^{n_8} 
 \; |\; {n_1},\ldots,{n_8}\in\{ 0,1,2\} \}
\]
is a linear basis  of $A(H) \utimes A(H)$. 
Using this basis and remembering \eqref{obvious}, 
any element $X$ of  $C([0,1])\ot A(H) \utimes A(H)$ can be written as
\[ \label{X}
X= \sum_{n_1, \ldots, n_8 =0}^2  
f_{n_1,\ldots,n_8}\otimes \tA^{n_1} \tB^{n_2} \tK^{n_3} \tF^{n_4} \utimes  \tA^{n_5} \tB^{n_6} \tK^{n_7} \tF^{n_8} , 
\quad f_{n_1,\ldots,n_8}\in C([0,1]). 
\]
Hence 
\begin{align*}
&A(\HA)\!\underset{D(\HA)}{*}\!A(\HA)
=\Big\{\sum_{n_1,\ldots,n_8=0}^2\!\!\!  f_{n_1,\ldots,n_8}\otimes 
A_L^{n_1} B_L^{n_2} K_L^{n_3} F_L^{n_4} A_R^{n_5} B_R^{n_6} K_R^{n_7} F_R^{n_8} \; \Big|\;
\text{ all } f_{n_1,\ldots,n_8}\in C([0,1]),
\nonumber\\ 
&f_{n_1,\ldots,n_8}(0)=0 \text{ for } (n_1,\npt n_2,\npt n_3,\npt n_4)\neq (0,0,0,0), \ \,
f_{n_1,\ldots,n_8}(1)=0 \text{ for } (n_5,\npt n_6,\npt n_7,\npt n_8)\neq (0,0,0,0)
\Big\}. 
\end{align*}

For an explicit description of  the coaction-invariant subalgebra $(A(\HA){*}_{D(\HA)} A(\HA))^{\mathrm{co}D(\HA)}$, 
we use the fact that, by Lemma~\ref{unbraid}, the left canonical map ${}_{A(H)}\can$ is an isomorphism of  right 
\mbox{$D(H)$-comodule} algebras.  
This allows us to conclude that $\{a_L^jb_L^lk_L^m f_L^n\;|\;j,l,m,n\in\{0,1,2\}\}$, where 
\begin{align*}
a_L &:= {}_{A(H)}\can^{-1}(A\ot 1)  = \tA\utimes \tA^2, &&b_L:= {}_{A(H)}\can^{-1}(B\ot 1) = -q \tA \utimes \tB + q^2 \tB \utimes \tA, 
\\
k_L &:= {}_{A(H)}\can^{-1}(K\ot 1)  =  \tK\utimes \tK^2, &&f_L:= {}_{A(H)}\can^{-1}(F\ot 1) = -1\utimes \tF\tK^2 + \tF \utimes \tK^2, 
\end{align*}
 is a basis of  the coaction-invariant subalgebra 
\[
(A(H)\utimes A(H))^{\mathrm{co}D(\HA)}\cong D(H)\ot A(H)^{\mathrm{co}D(\HA)}=D(H)\ot\mathbb{C}.
\]
\noindent
Thus we obtain the following explicit description of the coaction-invariant subalgebra
\begin{align}
\Big(A(H)\!\underset{D(\HA)}{*}\!A(H)\Big)^{\mathrm{co}D(\HA)} = 
\Big\{&\sum_{j,l,m,n=0}^2 \!\!\! g_{jlmn}  \ot a_L^jb_L^lk_L^m f_L^n\;\Big|\;  \text{ all } g_{jlmn}  \in C([0,1]), \nonumber\\
& g_{jlmn} (0)=0=g_{jlmn} (1) \text{ for }  (j,l,m,n)\neq(0,0,0,0) \Big\} .
\end{align}
Since the generators $a_L,b_L,k_L,f_L$ satisfy the same 
commutation relations as  
the generators $A,B,K,F$ of $D(H)$, it is now evident that the coaction-invariant subalgebra 
is isomorphic to 
the unreduced suspension of $D(H)$, as claimed in Theorem~\ref{dangerous}.

\noindent{\bf Acknowledgments.}
It is a pleasure to thank S.~L.~Woronowicz 
for interesting discussions. Ludwik D\k{a}browski was  partially supported 
by the PRIN 2010-11 grant ``Operator Algebras,
 Noncommutative Geometry and
Applications" and WCMCS (Warsaw). He also
  gratefully ack\-nowledges the hospitality of ESI (Vienna), IHES (Bures-sur-Yvette) and IMPAN (Warsaw).
Tom Hadfield was financed via the EU Transfer of Knowledge contract 
MKTD-CT-2004-509794. \mbox{Piotr} M.\ Hajac was  partially supported by the NCN grant 2011/01/B/ST1/06474. 
Elmar \mbox{Wagner} was partially sponsored by WCMCS, IMPAN (Warsaw) and CIC-UMSNH (Morelia).


\begin{thebibliography}{BEEK72a}
\bibitem[B-G78]{b-g78}
G.\ Bergman,
\emph{The diamond lemma for ring theory},
Adv. in Math.  {\bf 29} (1978), 178--218.

\bibitem[B-GE93]{b-ge93}
G.~E. Bredon, \emph{Topology and geometry}, Graduate Texts in Mathematics 
139, Springer-Verlag, New York, 1993.

\bibitem[BH04]{bh04}
T.~Brzezi\'nski, P. M. Hajac, 
\emph{The Chern-Galois character}, C. R. Math. Acad. Sci. Paris {\bf 338} (2004), 113--116.

\bibitem[C-S98]{c-s98}
S.~Caenepeel, \emph{Brauer Groups, Hopf Algebras and Galois Theory}, 
in: K-Monographs in Math., vol. 4, Kluwer Academic, Dordrecht, 1998.

\bibitem[CM08]{cm08}
A.~Connes, M. Marcolli, \emph{Noncommutative geometry, quantum fields and motives}, 
American Mathematical Society Colloquium Publications, 55. American Mathematical Society, 
Providence, RI; Hindustan Book Agency, New Delhi, 2008.

\bibitem[DDHW]{ddhw}    
L.~D\k abrowski, K.~De Commer, P. M. Hajac, E.~Wagner,
\emph{Principal coactions on unreduced nuclear couple suspensions of unital C*-algebras}, in preparation.

\bibitem[DHH]{dhh}    
L.~D\k abrowski, T.~Hadfield, P. M. Hajac, 
\emph{Noncommutative join constructions}, arXiv:1407.6020.

\bibitem[DHS99]{dhs99}    
L.~D\k abrowski, P. M. Hajac, P. Siniscalco, 
\emph{Explicit Hopf-Galois description of $SL_{e^{2i\pi /3}}(2)$-induced Frobenius homomorphisms}, 
Proceedings of the ISI GUCCIA Workshop, 
Horizons in World Physics {\bf 226} (1999), 279--298.

\bibitem[DHW]{dhw}    
L.~D\k abrowski,  P. M. Hajac, E.~Wagner,
\emph{Braided join comodule algebras from homogeneous coactions}, in preparation.

\bibitem[DNS98]{dns98}    
L.~D\k abrowski, F. Nesti, P. Siniscalco, 
\emph{A finite quantum symmetry of M(3,C)} Internat. J. Modern Phys. {\bf  A 13} (1998), 4147--4161.

\bibitem[DS13]{ds13}   
L.~D\k abrowski, A. Sitarz, 
\emph{Noncommutative circle bundles and new Dirac operators}, 
Commun. Math. Phys. {\bf 318} (2013), 111--130. 

\bibitem[DSZ14]{dsz14}    
L.~D\k abrowski, A. Sitarz, A. Zucca, 
\emph{Dirac operator on noncommutative principal circle bundles}, 
Int. J. Geom. Methods Mod. Phys.  {\bf 11} (2014), 1450012.

\bibitem[DZ]{dz}  
L.~D\k abrowski, A. Zucca,  
\emph{Dirac operators on noncommutative principal torus bundles}, 
arXiv: 1308.4738. 

\bibitem[DT89]{dt89}    
Y.~Doi, M. Takeuchi, 
\emph{Hopf-Galois extensions of algebras, the Miyashita-Ulbrich action, and Azumaya algebras}, 
J. Algebra {\bf 121} (1989), 488--516.

\bibitem[D-VG87]{d-vg87}  
V.~G. Drinfeld, \emph{Quantum groups}, Proceedings of the International Congress of Mathematicians, Vol. 1, 2 (Berkeley, Calif., 1986), 798--820, 
Amer. Math. Soc., Providence, RI, 1987.

\bibitem[D-M96]{d-m96} M.~Durdevic, 
\emph{Quantum gauge transformations and braided structure 
on quantum principal bundles},  
arXiv:q-alg/9605010.

\bibitem[HKRS04a]{hkrs04a}
P.~M. Hajac, M. Khalkhali, B. Rangipour, Y. Sommerh\"auser, 
\emph{Stable anti-Yetter-Drinfeld modules}, 
C. R. Acad. Sci. Paris, Ser. I {\bf 338} (2004), 587--590.

\bibitem[HKRS04b]{hkrs04b}
P.~M. Hajac, M. Khalkhali, B. Rangipour, Y. Sommerh\"auser, 
\emph{Hopf-cyclic homology and cohomology with coefficients}, 
C. R. Acad. Sci. Paris, Ser. I {\bf 338} (2004), 667--672.

\bibitem[HKMZ11]{hkmz11}
P.~M. Hajac, U.\ Kr\"ahmer, R.\ Matthes, B.\ Zieli\'nski, 
\emph{Piecewise principal comodule algebras},
J. Noncommut. Geom. {\bf 5} (2011), 591--614. 
 

\bibitem[LS69]{ls69} 
R.~G. Larson, M.~E. Sweedler, 
\emph{An associative orthogonal bilinear form for Hopf algebras}, 
Amer. J. Math. {\bf 91} (1969), 75--94. 


\bibitem[M-J56]{m-j56} 
J.~Milnor, \emph{Construction of universal bundles. II}, Ann. of Math. {\bf 63} (1956), 430--436.

\bibitem[R-MA90]{r-ma90}
M.~A. Rieffel, \emph{Noncommutative tori ---  case study of noncommutative differentiable manifolds},  
Geometric and topological invariants of elliptic operators (Brunswick, ME, 1988), 191--211, 
Contemp. Math. {\bf 105}, Amer. Math. Soc., Providence, RI, 1990.

\bibitem[SS05]{ss05}
P.~Schauenburg, H.-J. Schneider,
\emph{On generalized Hopf Galois extensions},
J. Pure Appl. Algebra {\bf 202} (2005), 168--194. 


\bibitem[S-HJ90]{s-hj90}
H.-J. Schneider,
\emph{Representation theory of Hopf Galois extensions},
Israel J. Math.  {\bf 72} (1990), 196--231.
 

\bibitem[W-SL98]{w-sl98} 
S.~L. Woronowicz, 
\emph{Compact quantum groups. Sym\'etries quantiques (Les Houches, 1995)}, 
845--884, North-Holland, Amsterdam, 1998. 


\end{thebibliography}
\end{document}